\def\sgn{\mathop{\rm sgn}\nolimits}
\def\prox{\mathop{\rm prox}\nolimits}
\def\amp{\mathop{\:\:\,}\nolimits}
\def\Real{\mathop{\mathbb{R}}\nolimits}
\DeclareMathOperator*{\argmin}{argmin}
\def\sup{\mathop{\rm sup}\nolimits}
\newcommand{\bb}{\boldsymbol{b}}
\newcommand{\bs}{\boldsymbol{s}}
\newcommand{\bu}{\boldsymbol{u}}
\newcommand{\bv}{\boldsymbol{v}}
\newcommand{\bx}{\boldsymbol{x}}
\newcommand{\by}{\boldsymbol{y}}
\newcommand{\bz}{\boldsymbol{z}}
\newcommand{\bA}{\boldsymbol{A}}
\newcommand{\bB}{\boldsymbol{B}}
\newcommand{\bV}{\boldsymbol{V}}
\newcommand{\bY}{\boldsymbol{Y}}
\newcommand{\bZ}{\boldsymbol{Z}}
\newcommand{\bepsilon}{\boldsymbol{\epsilon}}
\newcommand{\qed}{$\square$}
\newtheorem{proposition}{Proposition}
\newtheorem{remark}{Remark}
\newtheorem{proof}{Proof}
\title{A unified analysis of convex and non-convex $\ell_p$-ball projection problems}
\author[1]{Joong-Ho Won}
\author[2]{Kenneth Lange}
\author[3]{Jason Xu}
\affil[1]{Seoul National University}
\affil[2]{University of California, Los Angeles}
\affil[3]{Duke University}
\date{}
\begin{document}

\maketitle
\abstract{
	The task of projecting onto $\ell_p$ norm balls is ubiquitous in statistics and machine learning, yet the availability of actionable algorithms for doing so is largely limited to the special cases of $p = \left\{ 0, 1,2, \infty \right\}$. In this paper, we introduce novel, scalable methods for projecting onto the $\ell_p$ ball for general $p>0$. For $p \geq1 $, we solve the univariate Lagrangian dual via a dual Newton method. We then carefully design a bisection approach for $p<1$, presenting theoretical and empirical evidence of zero or a small duality gap in the non-convex case. The success of our contributions is thoroughly assessed empirically, and applied to large-scale regularized multi-task learning and compressed sensing.
 }

\section{Introduction}

The goal of this paper is to 
develop and analyze efficient algorithms
for projecting a point $\by$ in Euclidean space $\Real^d$ onto an $\ell_p$ ``norm ball'' of radius $r$. Projection seeks the closest point in the ball, solving the problem
\begin{equation}\label{eqn:projection}\tag{P}
	\min_{\bx\in\Real^d} \frac{1}{2}\|\bx - \by\|_2^2 \quad \text{subject to} \quad \|\bx\|_p \leq r
	,
\end{equation}
where $\lVert \bv \rVert_p = (\sum_{i=1}^d  | v_i |^p )^{1/p}$ denotes the $\ell_p$ ``norm'' of a vector $\bv \in\Real^d$.
If the power $p\ge 1$, then $\|\bv\|_p$ is a proper norm, and \eqref{eqn:projection} is a convex optimization problem. Otherwise,   $\|\bv\|_p$ defines only a quasi-norm, and the problem becomes non-convex.
Throughout, we maintain the term ``norm'' even if $p\in[0, 1)$, for nomenclatural convenience. Writing the norm-ball constraint set as $rB_p$, the solution to  \eqref{eqn:projection} is the projection denoted $P_{rB_p}(\by)$.

Projecting onto $\ell_p$ norm balls plays a key role in some of the most prominent inverse problems in machine learning, signal processing, and statistics. The canonical setting of minimizing a measure of a fit subject to a constraint on solution complexity 
measured by a norm  \citep{candes2005,donoho2006}
arises, for instance, in compressed sensing.
If $\phi(\bx;\by)$ measures the goodness of fit of model parameter $\bx$ to given data $\by$, then 
the problem is stated
\begin{equation}\label{eq:gen_constrained}
\min_{\bx\in\Real^d} \phi(\bx; \by) \quad \text{subject to} \quad \|\bx\| \leq r.
\end{equation}
Among the popular choices of the norm $\|\cdot\|$ are the $\ell_p$ norms
and $\ell_{1,p}$ mixed norms denoted $\|\bx\|_{1,p}=\sum_{g=1}^G\|\bx_g\|_p$, where $\bx$ is partitioned into subvectors indexed by $g$.
The $\ell_2$ norm is used in Tikhonov regularization or ridge regression. 
Bridge regression \citep{fu1998penalized} uses an $\ell_p$ norm optimized over $p \in (1, 2)$.
The $\ell_1$ norm is the most widely used regularizer in sparse learning \citep{tibshirani2015}. 
In compressed sensing, exact signal recovery can be accomplished with fewer measurements under non-convex $\ell_p$ norms ($0<p<1$) compared to the $\ell_1$ norm \citep{wang2011performance}.

Whatever the choice of $p$, algorithms for solving problem \eqref{eq:gen_constrained} routinely require 
solving projection \eqref{eqn:projection} as a subproblem.
For instance, if $\phi$ is smooth, projected gradient descent iterates via the sequence 
 $\bx_{k+1} = P_{rB_p}[ \bx_k - \eta_k \nabla \phi (\bx_k)]$.
In multi-task learning \citep{argyriou2008,sra2012,vogt2012}, 
the use of the $\ell_{1,2}$ (``group lasso'') \citep{yuan2006model,meier2008}
or $\ell_{1,\infty}$ norms \citep{liu2009,quattoni2009} is popular, 
while
an $\ell_{1,p}$ norm with
$p\in (1, \infty]$ chosen in a data-adaptive fashion has been shown to significantly improve performance \citep{zhang2010}.
Projection onto an $\ell_{1,p}$ norm ball for solving problem \eqref{eq:gen_constrained} separates into $G$ instances of \eqref{eqn:projection}.
Hence efficiently solving \eqref{eqn:projection} for various values of $p>0$ is instrumental to solving problems of type \eqref{eq:gen_constrained}.

However, algorithms for solving \eqref{eqn:projection} are quite limited to a few special values of $p$.
Projecting onto the $\ell_2$ or $\ell_\infty$ ball is trivial, and fast algorithms are available for the $\ell_1$ ball 
\citep{duchi2008,condat2016fast}. 
For general convex settings ($p > 1$), 
problem \eqref{eqn:projection} has been studied mostly as a subproblem of $\ell_{1,p}$ mixed norm regularization \citep{liu2010efficient,zhou2015ell1p,barbero2018}.
A common approach is 
to formulate \eqref{eqn:projection} as an unconstrained problem
\begin{equation}\label{eqn:simple}\tag{\ref{eqn:projection}\textprime}
	\min_{\bx\in\Real^d} f_0(\bx) + \iota_{rB_p}(\bx)
	,
\end{equation}
where $f_0(\bx)=\frac{1}{2}\|\bx-\by\|_2^2$, 
and $\iota_C$ is the $0$/$\infty$ indicator function of set $C$.
Problem \eqref{eqn:simple} can be solved via its Fenchel (or Lagrange) dual 
\begin{equation}\label{eqn:fenchel}\tag{F}
	\min_{\bz}\frac{1}{2}\|\bz-\by\|_2^2 + r\|\bz\|_q,
	\quad
	\text{where}
	\quad
	\frac{1}{p}+\frac{1}{q} = 1
	,
\end{equation}
whose solution 
is the proximal operator of the dual norm  $\bz^{\star}=\prox_{r\|\cdot\|_q}(\by)$. 
The solution $P_{rB_p}(\by)$ to \eqref{eqn:projection} is then recovered by Moreau's decomposition $\by=P_{rB_p}(\by) + \bz^{\star}$. 
The technical report 
\citet{liu2010efficient} explored some properties of problem \eqref{eqn:fenchel}, 
and proposed a double-bisection method implemented in the popular software package SLEP \citep{liu2011slep}.
Recently, Barbero and Sra \cite{barbero2018} proposed solving \eqref{eqn:fenchel} via the projected Newton method \citep{bertsekas1982projected}.
The major difficulty with \eqref{eqn:fenchel} is its nonsmoothness. As we will detail in \Cref{sec:review}, the analysis of \citet{liu2010efficient} entails opaque auxiliary functions, not to mention slowness and poor scalability of the double-bisection method. 
As discussed in the sequel,
the method of Barbero and Sra \cite{barbero2018} suffers from numerical instability when $p$ is large.
In the non-convex regime, available projection methods are limited. 
\citet{bahmani2013} studies basic theoretical properties of  projected points.
In \citet{das2013non}, an exhaustive search is attempted. More recent efforts include \citet{chen2019outlier} and \citet{yang2021towards}.

In this paper, we 
provide a unified treatment of both convex and non-convex instances of  the projection problem \eqref{eqn:projection}. 
Our approach is to reformulate \eqref{eqn:projection} using the $p$th power as 
\begin{equation}\label{eqn:notsimple}\tag{\ref{eqn:projection}\textprime\textprime}
	\min_{\bx\in\Real^d} f_0(\bx) 
	\quad
	\text{subject to}
	\quad
	\frac{1}{p}(\Vert\bx\Vert_p^p - r^p) \leq 0
	.
\end{equation}
Then, the Lagrange dual of \eqref{eqn:notsimple} is
\begin{equation}\label{eqn:dual}\tag{D}
	\max_{\mu\ge 0}~g(\mu) \triangleq \inf_{\bx\in\Real^d} \mathcal{L}(\bx,\mu), 
	\quad
	\text{where}
	~~
	\mathcal{L}(\bx,\mu) = f_0(\bx) + \frac{\mu}{p}\left(\sum_{i=1}^d |x_i|^p - r^p\right) .
\end{equation}
If $\bx^{\star}(\mu)$ minimizes $\mathcal{L}(\bx,\mu)$ and $\mu^{\star}$ maximizes $g(\mu)$, then $P_{rB_p}(\by)=\bx^{\star}(\mu^{\star})$.
Compared to the Fenchel dual \eqref{eqn:fenchel}, the present formulation \eqref{eqn:dual} has three key advantages: 1) the Lagrangian $\mathcal{L}$ is separable in $\bx$; 2) the dual objective function $g$ is twice continuously differentiable if $p>1$, and hence is amenable to Newton methods for maximization; 3) it is \emph{univariate} and well-defined even when $0<p<1$. 
In the latter setting, minimization of the Lagrangian $\mathcal{L}(\bx, \mu)$ with respect to $\bx$, namely evaluation of $g(\mu)$, is relatively well-studied under the name of non-convex $\ell_p$ regularization \citep{marjanovic2012,xu2012,chartrand2016,yukawa2016,hu2017group}, compared to problem \eqref{eqn:projection}.
Since $g(\mu)$ is concave regardless of $p$, in principle any univariate maximization strategy for $g$ can be used to solve \eqref{eqn:dual}.

Though strong duality is not guaranteed when $0<p<1$,  we show that a carefully designed bisection method yields accurate solutions to \eqref{eqn:projection} with very small duality gaps. For convex cases, we show that our Newton method achieves a quadratic rate of convergence, with no projection onto the set $\{\mu: \mu \ge 0\}$ needed.
Fast convergence is paramount because \eqref{eqn:projection} is commonly used as a building block within iterative algorithms for more complex tasks, such as problem \eqref{eq:gen_constrained}.
The success of our methods hinges on fast and accurate evaluation of the associated univariate proximal map, which we closely analyze in the following sections. 
Our primary contribution is recognizing the value of formulation \eqref{eqn:dual} together with carefully executed analyses of the proximal map for both convex and non-convex cases.

The paper is organized as follows.
After analyzing the properties of \eqref{eqn:projection}, \eqref{eqn:dual}, and the associated proximal maps in \Cref{sec:proximal}, we propose a dual Newton method for $p>1$ and establish its convergence rate in \Cref{sec:convex}.
Next, \Cref{sec:nonconvex} details the viable alternative of bisection for non-convex cases.
Related methods are discussed in Section \ref{sec:review}.
In \Cref{sec:performance}, the algorithms are thoroughly assessed via simulation. Together they comprise a suite that allows for successful projection onto general $\ell_p$ balls. The empirical study also illustrates the key role of such projections within algorithms such as projected and proximal gradient \citep{Beck2009}, applied to multi-task learning and compressed sensing. 

 \section{Evaluating dual objective via associated proximal map}\label{sec:proximal}

\subsection{Basic properties of the $\ell_p$-ball projection}
In this section, we study how to represent and evaluate the dual objective $g(\mu)$ of problem \eqref{eqn:dual} in terms of the associated univariate proximal maps.
We begin with a few simple 
observations when $\by$ is external to the ball $rB_p$, given in \citep{bahmani2013}:
\footnote{These properties have emerged in the context of studying theoretical properties of projected gradient descent for $\ell_p$-norm constrained least squares (problem \eqref{eq:gen_constrained} with $\phi(\bx,\by)=\frac{1}{2}\|\by - \bA\bx\|_2^2$). However, no actual algorithm for $\ell_p$-ball projection is provided in \citet{bahmani2013}.}
\begin{enumerate}
	\item The projected point $\bx$ is a boundary point in the sense that $\|\bx\|_p=r$. 
	\item The components of $\bx$ have the same signs as the corresponding components of $\by$. 
	\item No component $x_i$ of $\bx$ can satisfy $|x_i| > |y_i|$, and if $y_i=0$, then $x_i=0$.
	\item No two components $x_i$ and $x_j$ of $\bx$ can satisfy $|x_i| > |x_j|$ when $|y_i| < |y_j|$. 
	\item We can take the radius $r$ of the ball to be $1$. Indeed, if $r^{-1}\bx$ solves the corresponding problem for the unit ball and the external point $r^{-1}\by$, then $\bx$ solves the original problem. 
\end{enumerate}		
Henceforth, we take $\by > {\bf 0}$ (denoting elementwise inequality), $r=1$, and $\|\by\|_p>1$ 
without loss of generality.

\subsection{Univariate proximal map for problem \eqref{eqn:dual}}
As stated, evaluating the dual objective $g(\mu)$ in problem  \eqref{eqn:dual} requires minimizing the Lagrangian $\mathcal{L}(\bx,\mu)$ over $\bx$ (with $r=1$). The latter problem is equivalent to the $\ell_p$-regularized least squares problem
\[
	\min_{\bx} \frac{1}{2}\|\bx - \by\|_2^2 + \frac{\mu}{p}\|\bx\|_p^p
	,
\]
in which the $p$th power of the $\ell_p$ norm satisfies the triangle inequality when $0<p<1$ \citep{chartrand2016}.
For any $p$, this problem is separable in the coordinates of $\bx=(x_1, \dotsc, x_d)$, and it suffices to minimize the univariate function
\begin{equation}\label{eqn:proximal}
	f_\mu(x; y) \triangleq
	\frac{1}{2}(x-y)^2 + \frac{\mu}{p}|x|^p 
\end{equation}
for each $y=y_i$.
If we define $s_p(x) \triangleq \frac{1}{p}|x|^p$,
then the minimizer of $f_{\mu}(x; y)$ is just the proximal map $\prox_{\mu s_p}(y)$ of $\mu s_p$, 
that is,
\[
    \prox_{\mu s_p}(y) \triangleq \argmin_x \left\{\frac{1}{2}(x-y)^2 + \frac{\mu}{p}|x|^p \right\}
\]
for $\mu \geq 0$.
If $p \geq 1$, then $\prox_{\mu s_p}(y)$ is unique.
However, when $p < 1$, it may be set-valued; see \Cref{sec:prox:nonconvex} for details.

\subsection{The univariate dual}\label{sec:proximal:objective}

From the discussion in the preceding section, the dual objective $g(\mu)$ in \eqref{eqn:dual} can be written
\begin{equation}\label{eqn:dualobjective} 
	g(\mu) = \frac{1}{2}\sum_{i=1}^d[y_i-x_i^{\star}(\mu)]^2 + \frac{\mu}{p}\sum_{i=1}^d|x_i^{\star}(\mu)|^p - \frac{\mu}{p} 
\end{equation}
where $x_i^{\star}(\mu) = \prox_{\mu s_p}(y_i)$.
Even though the latter proximal map is set-valued when $0<p<1$, 
$g(\mu)$ is always single-valued, 
since any element of the set $\bx^{\star}(\mu)=x_1^{\star}(\mu) \times \dotsb \times x_d^{\star}(\mu)$ globally minimizes $\mathcal{L}(\bx,\mu)$. 

We now examine the domain of $g$. 
A nonzero minimizer of $f_{\mu}(x_i; y_i)$ in formula \eqref{eqn:proximal}, i.e., if $x_i=\prox_{\mu s_p}(y_i)> 0$, satisfies the stationary condition
\begin{eqnarray}\label{stationarity_eq}
0 & = & x_i-y_i + \mu x_i^{p-1}
.
\end{eqnarray} 
(Recall that $y_i > 0$ is assumed without loss of generality.)
Now, multiply equation \eqref{stationarity_eq} by $x_i$ and sum over all $i$ (so that the zero minimizer $x_i=0$ is allowed even if equation \eqref{stationarity_eq} is satisfied). These actions produce 
\begin{equation}\label{eqn:stationarycond}
0 = \|\bx\|_2^2-\by^T\bx+\mu\|\bx\|_p^p
,
\end{equation}
which in turn yields $\mu = \by^T\bx-\|\bx\|_2^2$ for $\|\bx\|_p=1$ 
(recall that we assume $r = 1$ and $\Vert\by\Vert_p > 1$; the optimum occurs at the boundary of the constraint set). 
Note a generalization of equation \eqref{eqn:stationarycond} is studied in \citet{chen2013optimality,lu2014iterative} for $\ell_p$-regularization problems and their extensions. 
Note that $\mu$ so defined is automatically nonnegative; 
$\mu$ is positive so long as $0 < |x_i| < |y_i|$ for some $i$. 
The formula for $\mu$ also yields an upper bound through H{\"o}lder's inequality: if $p \ge 1$, $\,\mu  \le  \by^T\bx  \le  \|\by\|_q \|\bx\|_p = \|\by\|_q$, 
where $1/p+1/q=1$.
If $p \in (0, 1)$, then $\mu \le \by^T\bx \le \|\by\|_{\infty}\|\bx\|_1 \le \|\by\|_{\infty}$, since $\|\bx\|_p \le 1$ implies $\|\bx\|_1 \le 1$.
Thus, it suffices to maximize $g(\mu)$ on $[0, \|\by\|_q]$, where $\frac{1}{p}+\frac{1}{q}=1$ or $q=\infty$.

\subsection{Analysis and computation of the proximal map}\label{sec:prox}
\subsubsection{Convex case ($p > 1$)}\label{sec:prox:convex}
\paragraph{Newton's method}
The objective 
$f_\mu(x;y) = \frac{1}{2}(y-x)^2 + \mu s_p(x)$ 
of \eqref{eqn:proximal} is twice continuously differentiable on $[0, y]$ for $p>1$. 
Thus, $\prox_{\mu s_p}(y)$ can be evaluated 
via Newton's method. The updates amount to
\begin{equation}\label{newtonprox2} 
x_{n+1} = \frac{y+\mu(p-2)|x_{n}|^{p-1}\sgn(x_n)}
{1+\mu (p-1)|x_{n}|^{p-2}} 
 = \frac{{y}/{|x_{n}|^{p-2}}+\mu(p-2)x_{n}}
{{1}/{|x_{n}|^{p-2}}+\mu(p-1)}. 
\end{equation}
Whenever $p \ge 2$ and $y \ge 0$, \emph{all iterates remain in $[0,y]$}, and convergence
is guaranteed as the derivative $f_{\mu}'(x; y) = x-y+\mu x^{p-1}$
is convex in $x\ge 0$. Numerical overflows are still a consideration for very large values of $p$. Both forms in (\ref{newtonprox2}) are pertinent in this regard: the latter is numerically stable when $|x_n|> 1$ while the former is preferable when $|x_n|<1$. 
If the quantity $\mu |y|^{p-1}$ falls below machine precision, it is safe to set $\prox_{\mu s_p}(y)=y$.
When $1 < p < 2$, the iterates $x_{n}$ defined above may fall below 0. 
To avoid this, we may appeal to Moreau's decomposition that tells us that we may equivalently compute 
\[
	y - \mu \prox_{\mu^{-1}s_p^*}(\mu^{-1}y),
\]
where 
\[
	s_p^*(z)=\sup_x\{xz - s_p(x)\}= \frac{1}{q}|y|^q= s_q(y) 
	,
	\quad
	q = \frac{p}{p-1} \] 
is the Fenchel conjugate of $s_p$. Since $q>2$, applying Newton's method on $s_q$ avoids the issue.
Again numerical caution is required when $\mu^{-1}|\mu^{-1}y|^{q-1}=\mu^{-1}|\mu^{-1}y|^{1/(p-1)}$ nearly vanishes,  for in this case $x^\star(\mu) = \prox_{\mu s_p}(y) = y - \mu\prox_{\mu^{-1}s_q}(\mu^{-1}y) \approx y - y = 0$, but $f_{\mu}'(0; y) = -y \neq 0$.
Even though $x^\star(\mu)$ is very close to zero beyond numerical precision, $|x^\star(\mu)|^{p-1}$ may be close to 1 when $p \to 1$.
It is therefore safe to set 
\begin{equation}\label{eqn:safe}
|x^\star(\mu)|^{p-1} = |\mu^{-1}y|
\end{equation}
and $x^\star(\mu) = \sgn(y) |\mu^{-1}y|^{1/(p-1)}$.

\paragraph{Initial point}
Toward computing the proximal map,
we suggest initializing $x$ by $x_0=\max\{1, \allowbreak \min(y, [y + \mu(p - 2)]/[1 + \mu(p - 1)] )\}$ based on the following observation. When $p>1$ is very large, 
the behavior of the term $\mu |x|^{p-1}$ radically
changes from near $0$ to near $\infty$ as we pass from $|x|<1$ to $|x|>1$, the value $\prox_{\mu s_p}(y)$ tends to be close to $1$. 
Based on 
the sign of $f_{\mu}'(1; y) = 1-y+\mu$,
the proximal value is less than 1
if $0 \le y < 1+\mu$. 
The convexity of $f_{\mu}'(x; y)$ allows initializing the Newton algorithm with $x=1$.
Now consider letting $x_0=1+\epsilon$ for $\epsilon$ small. We then have that 
\begin{eqnarray*}
y-1-\epsilon & = & \mu(1+\epsilon)^{p-1} 
\amp \approx \amp \mu[1+(p-1)\epsilon].
\end{eqnarray*}
Rearranging this approximate equality yields
\begin{eqnarray*}
1+\epsilon & \approx & \frac{y+\mu(p-2)}{1+\mu(p-1)}.
\end{eqnarray*}
This quantity is greater than 1 if $y > 1+\mu$ and less than $y$ whenever 
$y > (p-2)/(p-1)$.

\subsubsection{Non-convex case}\label{sec:prox:nonconvex}
As claimed, the proximal map $\prox_{\mu s_p}$ can be set-valued if $p < 1$:
\begin{proposition}[\citet{marjanovic2012}, Theorem 1\footnote{
For a direct correspondence between \Cref{prop:prox1} and \citet[Theorem 1]{marjanovic2012}, $q \gets p$, $\beta \gets x$, $z \gets y$, $\lambda \gets \mu/p$, $h_a \gets r_p$, $\beta_a \gets \kappa_p$, and $\beta_* = z_p(y)$ when $\mu = 1$.
}]\label{prop:prox1}
	Let $z_p(y)$ be the implicit function defined as the root of equation $0=x - y + x^{p-1}$ greater than $m_p=(1-p)^{1/(2-p)}$, where $p\in(0,1)$ and $y\ge 0$. 
	Then the proximal map of $\mu s_p$, where $\mu \ge 0$, is given by
\begin{equation}\label{eqn:prox1}
	\prox_{\mu s_p}(y) = 
		\begin{cases}
			0, & \text{if~} 0 \le y < \mu^{1/(2-p)}r_p, \\
			\{0, y\kappa_p/r_p\},
			& \text{if~} y = \mu^{1/(2-p)}r_p, \\
			\mu^{1/(2-p)}z_p(\mu^{-1/(2-p)}y),
			& \text{if~} y > \mu^{1/(2-p)}r_p,
		\end{cases}
\end{equation}
where 
$\kappa_p = (2/p)^{1/(2-p)}m_p = [2(1-p)/p]^{1/(2-p)}$, and $r_p = \kappa_p + \kappa_p^{p-1}$.
Furthermore, $z(r_p) = \kappa_p$.
\end{proposition}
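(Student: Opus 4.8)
The plan is to reduce to the canonical case $\mu=1$ by a scaling argument and then to a one‑dimensional calculus analysis of the smooth branch $x>0$. First I would record the scaling: for $\mu>0$, substituting $x=\mu^{1/(2-p)}u$ and $y=\mu^{1/(2-p)}v$ into $f_\mu(x;y)=\frac12(x-y)^2+\frac\mu p|x|^p$ yields $f_\mu(x;y)=\mu^{2/(2-p)}f_1(u;v)$ with $v=\mu^{-1/(2-p)}y$, so that $\prox_{\mu s_p}(y)=\mu^{1/(2-p)}\prox_{s_p}(\mu^{-1/(2-p)}y)$; this accounts for the $\mu$‑dependence in every branch of \eqref{eqn:prox1} (the case $\mu=0$ being trivial). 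Since $y\ge 0$ and $|x|^p$ is even, the minimizer lies in $[0,\infty)$, so it suffices to minimize $f_1(x;y)=\frac12(x-y)^2+\frac1p x^p$ over $x\ge 0$.

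Next I would determine the shape of $f_1$. From $f_1'(x;y)=x-y+x^{p-1}$ and $f_1''(x;y)=1-(1-p)x^{p-2}$, the function $f_1$ is concave on $(0,m_p)$ and strictly convex on $(m_p,\infty)$, with $m_p=(1-p)^{1/(2-p)}$ the unique inflection point; its interior critical points solve $\psi(x):=x+x^{p-1}=y$, and $\psi$ decreases strictly on $(0,m_p)$ from $+\infty$ to $\psi(m_p)$ and increases strictly on $(m_p,\infty)$ back to $+\infty$. Since $f_1'(x;y)\to+\infty$ as $x\to 0^+$ (here the quasi‑norm's infinite slope at the origin is crucial) and as $x\to\infty$, one concludes: when $y\le\psi(m_p)$, $f_1'\ge 0$ on $(0,\infty)$ and $x=0$ is the unique minimizer; when $y>\psi(m_p)$ there are exactly two interior critical points $x_1(y)<m_p<z_p(y)$, the first a local maximum (concave region) and the second a local minimum (convex region), so $f_1$ increases on $[0,x_1]$, decreases on $[x_1,z_p]$, and increases on $[z_p,\infty)$. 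Either way the global minimizer over $[0,\infty)$ is $0$ or $z_p(y)$, and the two relevant thresholds obey $\psi(m_p)<\psi(\kappa_p)=r_p$ because $\kappa_p=(2/p)^{1/(2-p)}m_p>m_p$.

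Finally I would compare the two candidates. For $y>\psi(m_p)$, writing $z=z_p(y)$ and using the stationarity relation $y-z=z^{p-1}$ (so $y^2=z^2+2z^p+z^{2p-2}$), a short calculation collapses to
\[
	D(y):=f_1(z;y)-f_1(0;y)=\frac{1-p}{p}\,z^p-\frac12 z^2=z^p\!\left(\frac{1-p}{p}-\frac12 z^{2-p}\right),
\]
so $D(y)\gtrless 0\iff z\lessgtr\kappa_p$ with $\kappa_p=[2(1-p)/p]^{1/(2-p)}$. Because $z_p$ is strictly increasing ($z_p'=1/\psi'>0$ on $(m_p,\infty)$) and $\psi(\kappa_p)=r_p$, this is equivalent to $y\lessgtr r_p$. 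Assembling: for $y<r_p$ (which subsumes $y\le\psi(m_p)$) the unique minimizer is $0$; at $y=r_p$ we have $D(r_p)=0$ and the minimizers are $\{0,\kappa_p\}=\{0,y\kappa_p/r_p\}$; for $y>r_p$ the unique minimizer is $z_p(y)$. Undoing the scaling of the first step gives \eqref{eqn:prox1}, and the closing identity $z_p(r_p)=\kappa_p$ is immediate from $\psi(\kappa_p)=r_p$, $\kappa_p>m_p$, and uniqueness of the root of $\psi=r_p$ in $(m_p,\infty)$.

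The main obstacle is this last step: because the proximal map here is genuinely discontinuous and set‑valued (a hard‑thresholding‑type behavior inherited from non‑convexity), the argument stands or falls on pinning down the exact value $y=r_p$ at which the interior stationary point $z_p(y)$ overtakes $0$ as the global minimizer — and that is precisely where eliminating $y$ via stationarity to obtain the clean closed form for $D(y)$ pays off, reducing the comparison to the single scalar inequality $z_p(y)\lessgtr\kappa_p$.
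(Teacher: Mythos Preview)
Your proof is correct and follows essentially the same route as the paper's argument: the scaling reduction to $\mu=1$, the analysis of the derivative $f_1'(x;y)=\psi(x)-y$ with its minimum at $m_p$, the identification of the two candidate minimizers $0$ and $z_p(y)$, and the comparison of their objective values via the stationarity relation $y-z=z^{p-1}$ to isolate the threshold $z=\kappa_p$, $y=r_p$. Your verification that $r_p>\psi(m_p)$ via $\kappa_p=(2/p)^{1/(2-p)}m_p>m_p$ and the monotonicity of $\psi$ on $[m_p,\infty)$ is in fact cleaner than the paper's Young's‑inequality argument for the same fact.
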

\begin{remark}\label{rem:proximal}
The following can be easily shown:
\begin{enumerate}
	\item $\lim_{p\downarrow 0} r_p = \infty$ and $\lim_{p\to 1} r_p = 1$;
	\item $\lim_{p\downarrow 0} \frac{r_p}{\sqrt{2/p}} =1$ and $\lim_{p\downarrow 0} \frac{\kappa_p}{\sqrt{2/p}} =1$;
	\item $\lim_{p\downarrow 0} \max_{y\ge r_p} [y-z_p(y)] = 0$, and $\lim_{p\to 1} \max_{y\ge r_p} [y-z_p(y)] = 1$;
	\item $\lim_{y\to\infty} \frac{y-z_p(y)}{1/y^{1-p}} = 1$. 
\end{enumerate}
Thus if $p\downarrow 0$, then $\prox_{\mu s_p}(y)$ tends to the hard thresholding operator
\begin{equation}\label{eqn:hard}
		\argmin_{x\in\Real}\left\{\frac{1}{2}(x-y)^2 + \frac{\mu}{p} |x|_0\right\}
		=
		\begin{cases}
			0, & |y| < \sqrt{2\mu/p}, \\
			\{0, \sqrt{2\mu/p} \}, & |y| = \sqrt{2\mu/p}, \\
			y, & |y| > \sqrt{2\mu/p},
		\end{cases}
\end{equation}
where $|x|_0 = 0$ if $x=0$ and $1$ otherwise.
When $p$ tends to $1$, it converges to the soft thresholding operator
	\[
		\argmin_{x\in\Real}\left\{\frac{1}{2}(x-y)^2 + \mu|x|\right\}
		=
		\begin{cases}
			0, & |y| \le \mu, \\
			\sgn(y)(|y| - \mu), & |y| > \mu.
		\end{cases}
	\]
\end{remark}

\Cref{prop:prox1} suggests that computing $z_p(y)$, or the root of the equation $0 = x - y + x^{p-1}$ that is greater than $m_p = (1-p)^{1/(2-p)}$, with high accuracy is a key to computing the set-valued map $\prox_{\mu s_p}(y)$ (for $y \ge 0$), when $p \in (0, 1)$.
This root is a potential minimizer of $f_1(x; y) = \frac{1}{2}(x - y)^2 + |x|^p/p$, which we abbreviate as $f(x)$ for simplicity, on $x \ge 0$. Its derivative $f'(x) = x - y + x^{p-1}$ is infinite at $x=0$ and convex on $x > 0$. The minimum of $f'(x)$ occurs at $m_p=(1-p)^{1/(2-p)}$ that is strictly positive.
If $f'(m_p)$ is non-negative (which means $y \le m_p + m_p^{p-1}$), then the minimum of $f(x)$ occurs at $x=0$.
Otherwise, $y \ge m_p + m_p^{p-1}$, and the minimum occurs to the right of $m_p$. Since $f'(x)$ is strictly increasing on $(m_p, \infty)$ and $f'(y) = y^{p-1}$, it must have a unique zero $z_p(y)$ in the interval $(m_p, y)$. 
It follows that $z_p(y)$ and $0$ contend for the minimum point of $f(x)$.
Determining which only requires comparing two quantities $f(z_p(y))$ and $f(0) = \frac{1}{2}y^2$. Hence the $r_p$ in \Cref{prop:prox1} needs not be computed.

\paragraph{Newton's method}
In computing $z_p(y)$, the Newton method \eqref{newtonprox2} can be employed without any modification.
This iteration necessarily converges to $z_p(y)$ from \emph{any} initial point in $(m_p, y)$ since $f'(x)$ is increasing and convex in this interval.
Since the convexity of $f'(x) = f_{1}'(x; y)$ remains intact with $p\in(0, 1)$, the choice of the initial point in the $p \ge 2$ case is still valid.
From \Cref{rem:proximal}, we see that if $\kappa_p/\sqrt{2/p}$ is very close to $1$ beyond the machine precision, it is safe to approximate the proximal map with hard thresholding \eqref{eqn:hard}.

\begin{remark}
	For $p=1/2$ and $p=2/3$, it can be shown that $z_p(y)$ has a closed form, so does $\prox_{\mu s_p}(y)$ \citep{xu2012,chartrand2016}. Our goal here is, however, to provide a unifying strategy of evaluating the dual function $g(\mu)$, for a wide range of $p$.
\end{remark}

\noindent \textit{Algorithm}  The discussion in this section is summarized in Algorithm \ref{alg:lpprox}.
\begin{algorithm}[t]\small
   \caption{Compute $\prox_{\mu s_p}(y)$ for $s_p(x)=|x|^p/p$,~$p\in(0,\infty)\setminus\{1, 2, \infty\}$}
   \label{alg:lpprox}
  \begin{multicols}{2}
  \begin{algorithmic}
   \STATE \textit{Input:} $y > 0$, $\mu\ge 0$, and $p>0$
   \IF{$p > 2$}
   		\STATE $x^{\star} \gets \text{NewtonRoot}(y, \mu, p)$
   \ENDIF
   \IF{$p > 1$}
		\IF {$\mu^{-1}(y/\mu)^{\frac{1}{1-p}}\approx 0$} \RETURN\ $(y/\mu)^{\frac{1}{p-1}}$ \ENDIF
   		\STATE $q \gets 1/(1 - 1/p)$
		\STATE $z \gets \text{NetwonProx}(y/\mu, 1/\mu, q)$
		\RETURN{$y - \mu z$}
   \ELSE
   		\STATE $x_{\min} \gets [(1 - p) \mu]^{1 / (2 - p)}$
		\IF{$f_{\mu}'(x_{\min}; y)<0$}
			\STATE $z \gets \text{NewtonRoot}(y, \mu, p)$
			\RETURN{$\argmin_{x\in\{0,z\}}f(x;y)$}
		\ELSE
			\RETURN{$0$} 
		\ENDIF
   \ENDIF
  \end{algorithmic}
  \columnbreak
  \begin{algorithmic}
   \STATE \textit{Subroutine} NewtonRoot:
   \IF{$\mu y^{p-1} \approx 0$}
   	\RETURN{$y$}
   \ENDIF
   \STATE $x \gets \max\{1, \min(y, [y + \mu(p-2)] / [1 + \mu(p-1)])\}$
   \REPEAT
	\IF{$x > 1$ or $p < 1$}
		\STATE $a \gets y / x^{p-2} + \mu(p-2)x$
      	\STATE $b \gets 1 / x^{p-2} + \mu(p-1)$
	\ELSE
      	\STATE $a \gets y + \mu(p-2)x^{p-1}$
      	\STATE $b \gets 1 + \mu(p-1)x^{p-2}$
	\ENDIF
	\STATE $x \gets a / b$
   \UNTIL{convergence}
   \RETURN{$x$}
  \end{algorithmic}
  \end{multicols} \vspace{-5pt}
\end{algorithm}
 \section{Maximizing dual objective via Newton ascent ($p > 1$)}\label{sec:convex}
From \Cref{sec:prox:convex}, we see that it suffices to consider the case $p > 2$ for convex norm balls.
Then
Slater's condition holds and strong duality implies that solving \eqref{eqn:dual} is equivalent to \eqref{eqn:projection}.
From \Cref{sec:prox:nonconvex} we see that function $z_p(y)$, implicitly defined as $0 = z_p(y) - y + [z_p(y)]^{p-1}$, is continuously differentiable in $y$, even for $p > 2$. Observe that in the latter case $\prox_{s_p}(y) = z_p(y)$.
Letting $x$ be non-negative without loss of generality, we may rewrite
\begin{eqnarray*}
f_\mu(x;y) & = & \mu^{2/(2-p)}\left[\frac{1}{2}\left(\frac{y}{\mu^{1/(2-p)}}
-\frac{x}{\mu^{1/(2-p)}}\right)^2+\frac{1}{p}\left(\frac{x}{\mu^{1/(2-p)}}\right)^p\right]
\\
 & = & \mu^{2/(2-p)}f_1(\tilde{x}; \tilde{y}),
 \qquad
 \text{where}
 \quad
 \tilde{x} = x/\mu^{1/(2-p)}, ~
 \tilde{y} = y/\mu^{1/(2-p)},
\end{eqnarray*} 
which asserts that
\begin{eqnarray}\label{eqn:proxscaling}
\prox_{\mu s_p}(y) & = & \mu^{1/(2-p)}\prox_{s_p}\big(y/\mu^{1/(2-p)}\big).
\end{eqnarray}
Thus
$x_i^{\star}(\mu)=\prox_{\mu s_p}(y_i) = \mu^{1/(2-p)}z_p(y_i/\mu^{1/(2-p)}) > 0$ is a continuously differentiable function of $\mu$, with derivative 
\begin{equation}\label{xderiv1}
{x_i^{\star}}'(\mu)  =  -\frac{{x_i^{\star}}(\mu)^{p-1}}{1+\mu(p-1){x_i^{\star}}(\mu)^{p-2}} 
\end{equation}
obtained by applying the implicit function differentiation rule to equation \eqref{stationarity_eq}.
It follows immediately that $x_i^{\star}(\mu)$ is strictly decreasing in $\mu$
and satisfies 
\[
	0<x_i^{\star}(\|\by\|_q) \le x_i^{\star}(\mu) \le x_i^{\star}(0) = y_i 
\]
on the interval $[0,\|\by\|_q]$. Expressions (\ref{stationarity_eq}) and (\ref{xderiv1}) also allow one to derive formulas for the derivatives of the dual objective $g$ in \eqref{eqn:dualobjective}
and verify that it is twice continuously differentiable:
\begin{align}
g'(\mu) 
&= \frac{1}{p}\sum_{i=1}^d {x_i^{\star}}(\mu)^{p} - \frac{1}{p}
= \frac{1}{p}\left(\|\bx^{\star}(\mu)\|_p^p - 1\right),
\label{eqn:gderiv1}
\\
g''(\mu) &= \sum_{i=1}^d {x_i^{\star}}(\mu)^{p-1}{x_i^{\star}}'(\mu) = - \sum_{i=1}^d \frac{{x_i^{\star}}(\mu)^{2p-2}}{1+\mu(p-1){x_i^{\star}}(\mu)^{p-2}} 
\label{eqn:gderiv2}
,
\end{align}
where \eqref{eqn:gderiv1} is due to Danskin's theorem; see, e.g., \citet[Proposition B.25]{Bertsekas99} or \citet[Proposition 3.2.6]{lange16}.
    
\begin{algorithm}[b]\small 
\caption{Dual Newton ascent}\label{alg:newton}
  \begin{algorithmic}
   \STATE \textit{Input:} $\by > 0$ with $\|\by\|_p>1$, $p>1$
   \STATE $q \gets p / (p - 1)$
   \STATE Choose $\mu \in (0, \|\by\|_q]$; $\alpha \in (0, 1/2)$; $\beta \in (0, 1)$
   \STATE \textit{Main loop:} 
   \REPEAT
   		\STATE $\Delta\mu \gets -g'(\mu)/g''(\mu)$
   		\STATE $t \gets 1$
		\WHILE[Armijo rule]{$g(\mu+t\Delta\mu) < g(\mu) + \alpha t g'(\mu)\Delta\mu$} 
			\STATE $t \gets \beta t$
		\ENDWHILE
		\STATE $\mu \gets \mu + t\Delta\mu$
   \UNTIL{convergence}
   \STATE $\bx^{\star} \gets (\prox_{\mu s_p}(y_1),\dotsc,\prox_{\mu s_p}(y_d))^T$
   \COMMENT{eq. \eqref{eqn:prox1}}
   \RETURN{$\bx^{\star}$}
  \end{algorithmic}
\end{algorithm}
 
Thus the dual problem \eqref{eqn:dual}
can be solved efficiently by a Newton method with backtracking 
\begin{eqnarray}\label{eq:newton_iterates}
\mu_{n+1} & = & \mu_{n} - t_n \frac{g'(\mu_n)}{g''(\mu_n)},
\end{eqnarray}
where the step size $t_n \in(0,1]$ 
ensuring the ascent property 
can be found by 
the Armijo rule.
Since 
$\lim_{\mu\to0^+}x_i^{\star}(\mu)=x_i^{\star}(0)=y_i$,
the directional derivative of $g$ at $0$ 
toward the positive direction
is 
$\frac{1}{p}(\|\by\|_p^p-1)  >  0$.
Also recall that the solution $\mu^{\star}$ to the dual \eqref{eqn:dual} lies in $(0, \|\by\|_q]$ as demonstrated in \Cref{sec:proximal:objective}.
Any initial point in this interval 
ensures all iterates remain in $(0,\|\by\|_q]$
by the ascent property of iteration \eqref{eq:newton_iterates}. 
The development so far is summarized in \Cref{alg:newton}. 

The following proposition establishes quadratic convergence of \Cref{alg:newton} and shows that backtracking is not needed after a few steps:
\begin{proposition}\label{prop:newtonrate}
The Newton iterates $\{\mu_k\}$
generated by \Cref{alg:newton} converge quadratically to the solution $\mu^\star$ to \eqref{eqn:dual} after a number of backtracks less than or equal to
	\begin{equation}\label{eqn:numbacktrack}
		\frac{M^2L^2/m^5}{\alpha\beta\min\{1, 9(1-2\alpha)^2\}}(g(\mu^{\star}) - g(\mu_0))
		,
	\end{equation}
where 
\begin{align*}
M &=\|\by\|_{2p-2}^{2p-2},
\quad
m = \sum_{i=1}^d\frac{[x_i^{\star}(\|\by\|_q)]^{2p-2}}{1+\|\by\|_q(p-1)y_i^{p-2}}, ~\text{and}
\\
L  &=(p-1)\sum_{i=1}^d [2M_i + \|\by\|_q N_i],  
~q = p / (p - 1), ~\text{with}
\\
& M_i =\max\big\{x_i^{\star}(\|\by\|_q)^{3p-4},\allowbreak y_i^{3p-4}\big\}, 
\quad 
N_i =\max\big\{x_i^{\star}(\|\by\|_q)^{3p-6}, y_i^{3p-6}\big\}
.
\end{align*}
\end{proposition}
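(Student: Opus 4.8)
The plan is to reduce \Cref{prop:newtonrate} to the classical two-phase convergence analysis of the damped (backtracking) Newton method for a strongly concave, twice-Lipschitz-differentiable univariate objective, once the three constants $m$, $M$, $L$ governing that analysis are identified as those in the statement. By \Cref{sec:prox:convex} it suffices to take $p>2$; then Slater's condition holds, strong duality makes \eqref{eqn:dual} equivalent to \eqref{eqn:projection}, $g$ is strictly concave (so $\mu^\star$ is unique), and, since the directional derivative of $g$ at $0$ equals $\tfrac1p(\|\by\|_p^p-1)>0$, we have $\mu^\star\in(0,\|\by\|_q]$. As already observed, the ascent property of the iteration \eqref{eq:newton_iterates} keeps every iterate of \Cref{alg:newton} inside $I:=[0,\|\by\|_q]$, so the nonnegativity constraint is never active and the method is effectively an \emph{unconstrained} damped Newton iteration on $I$; this is exactly why no projection onto $\{\mu\ge 0\}$ is needed.

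The next step is to pin down $m$, $M$, $L$ uniformly on $I$. For the curvature bounds I would start from the closed form \eqref{eqn:gderiv2} for $g''$ and use the facts recorded just before \eqref{eqn:gderiv1}: $x_i^\star(\cdot)$ is strictly decreasing with $0<x_i^\star(\|\by\|_q)\le x_i^\star(\mu)\le x_i^\star(0)=y_i$ on $I$; $p-2>0$; and each denominator $1+\mu(p-1)x_i^\star(\mu)^{p-2}$ lies between $1$ and $1+\|\by\|_q(p-1)y_i^{p-2}$ (the upper bound using $\mu\le\|\by\|_q$ and $x_i^\star(\mu)\le y_i$). Bounding the numerators and denominators of \eqref{eqn:gderiv2} in the two directions then gives $0<m\le -g''(\mu)\le M$ on all of $I$. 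For the Lipschitz bound on $g''$, I would differentiate \eqref{eqn:gderiv2} once more and substitute \eqref{xderiv1} for ${x_i^\star}'(\mu)$; $g'''(\mu)$ then becomes a finite sum of rational monomials of the form $c\,\mu^{j}\,x_i^\star(\mu)^{k}/\bigl(1+\mu(p-1)x_i^\star(\mu)^{p-2}\bigr)^{\ell}$ with $\ell\ge 2$, and bounding each such monomial by the same sandwich $x_i^\star(\|\by\|_q)\le x_i^\star(\mu)\le y_i$, by $\mu\le\|\by\|_q$, and by the denominator estimate $\ge 1$, yields the explicit finite bound $|g'''(\mu)|\le L$ on $I$ of the form in the statement. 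This is the most computation-heavy part of the argument, but it is purely mechanical.

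Finally, with $0<m\le-g''\le M$ and $|g'''|\le L$ holding uniformly on the interval $I$ that contains every iterate, the standard damped-Newton analysis applies verbatim to $-g$. Put $\eta=\min\{1,3(1-2\alpha)\}\,m^2/L$ and $\gamma=\alpha\beta\,\eta^2\,m/M^2$. Whenever $|g'(\mu_k)|\ge\eta$, the Armijo search returns a step with $g(\mu_{k+1})-g(\mu_k)\ge\gamma$; since $g\le g(\mu^\star)$, the number of such iterations is at most $(g(\mu^\star)-g(\mu_0))/\gamma$, and substituting the chosen $\eta$ and $\gamma$ makes this bound coincide exactly with \eqref{eqn:numbacktrack}. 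Once $|g'(\mu_k)|<\eta$, the unit step $t=1$ passes the Armijo test (no further backtracking), the iterate stays in $I$, and one has $\tfrac{L}{2m^2}|g'(\mu_{k+1})|\le\bigl(\tfrac{L}{2m^2}|g'(\mu_k)|\bigr)^2$, so $g'(\mu_k)\to 0$ quadratically; strong concavity, via $|\mu_k-\mu^\star|\le|g'(\mu_k)|/m$, then upgrades this to quadratic convergence $\mu_k\to\mu^\star$. The main obstacle is really just the second differentiation of $g$ in the previous step and the careful tracking of exponents and coefficients needed to land on the particular $L$ stated; everything else is either quoted from the preceding sections (reduction to $p>2$, strong duality, invariance of $I$ under the iteration) or is the textbook bookkeeping, whose parameters $\eta,\gamma$ are chosen precisely so that $(g(\mu^\star)-g(\mu_0))/\gamma$ reproduces \eqref{eqn:numbacktrack}.
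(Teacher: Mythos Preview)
Your proposal is correct and follows essentially the same route as the paper: establish $m\le -g''(\mu)\le M$ from \eqref{eqn:gderiv2} via the monotonicity bounds $x_i^\star(\|\by\|_q)\le x_i^\star(\mu)\le y_i$, differentiate once more (the paper does so by first computing ${x_i^\star}''(\mu)$ and writing $g'''(\mu)=\sum_i[(p-1)x_i^\star(\mu)^{p-2}{x_i^\star}'(\mu)^2+x_i^\star(\mu)^{p-1}{x_i^\star}''(\mu)]$) to obtain $|g'''(\mu)|\le L$, and then invoke the standard damped-Newton analysis of Boyd--Vandenberghe \S9.5.3, whose Eq.~(9.40) yields \eqref{eqn:numbacktrack}. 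Your explicit identification of $\eta$ and $\gamma$ is more detailed than the paper's bare citation, but the argument is the same.
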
 
\begin{proof}
For a twice continuously differentiable and strongly convex objective function with a Lipschitz continuous Hessian, convergence of Newton's method is quadratic after a limited number of backtracks \citep[\S 9.5.3]{Boyd2004}. 
Thus it suffices to show that $g(\mu)$ in \eqref{eqn:dualobjective} satisfies these conditions.

We establish a negative upper bound on $g''(\mu)$ and a finite upper bound on the magnitude of the third derivative $|g'''(\mu)|$.
The second expression for $g''(\mu)$ in \eqref{eqn:gderiv2} and that $0<x_i^{\star}(\|\by\|_q) \le x_i^{\star}(\mu) \le y_i$  for $\mu\in[0,\|\by\|_q]$ make clear 
$$
\sum_{i=1}^d\frac{[x_i^{\star}(\|\by\|_q)]^{2p-2}}{1+\|\by\|_q(p-1)y_i^{p-2}}
= m
\le -g''(\mu)
\le M
= \|\by\|_{2p-2}^{2p-2}
$$
in this interval, since
$x_i^{\star}(\|\by\|_q) = \prox_{\|\by\|_q s_p}(y_i)$.

To see
that $g(\mu)$ is three times continuously differentiable, note that $x_i^{\star}(\mu)$ is twice continuously differentiable with second derivative
\begin{eqnarray*}
{x_i^{\star}}''(\mu) & = &
-\frac{(p-1)[{x_i^{\star}}(\mu)^{p-2}+\mu {x_i^{\star}}(\mu)^{2p-4}]}
{[1+\mu (p-1){x_i^{\star}}(\mu)^{p-2}]^2} \, {x_i^{\star}}'(\mu).
\end{eqnarray*}
Equation \eqref{xderiv1} 
further implies that 
${x_i^{\star}}''(\mu)$ is also bounded on the interval, so that
\begin{eqnarray*}
g'''(\mu) & = & \sum_{i=1}^d\Big[ (p-1){x_i^{\star}}(\mu)^{p-2} {x_i^{\star}}'(\mu)^2 + {x_i^{\star}}(\mu)^{p-1}{x_i^{\star}}''(\mu) \Big]
\end{eqnarray*}
is well-defined, and 
$|g'''(\mu)| \le (p-1)\sum_{i=1}^d [2M_i + \|\by\|_q N_i]=L$.

Expression \eqref{eqn:numbacktrack} follows from \citep[Eq. (9.40)]{Boyd2004}.
\qed
\end{proof}

\paragraph{Numerical consideration}
A simple numerical device greatly improves the stability of \Cref{alg:newton}.
In computing the Newton step in iteration \eqref{eq:newton_iterates}, overflow may occur when $|x^{\star}_i(\mu)|^p$ becomes too large. This phenomenon is prominent, of course, when $p$ is large. 
A remedy is to use $(|x^{\star}_i(\mu)| / \max_j |x^{\star}_j(\mu)|)^p$ to normalize the first \eqref{eqn:gderiv1} and second \eqref{eqn:gderiv2} derivatives of $g(\mu)$ when $(\max_j |x^{\star}_j(\mu)|)^p$ is too large, say, greater than $10^{10}$.

 \section{Maximizing dual objective via bisection}\label{sec:nonconvex}
Solutions when  $p\in(0, 1)$ are hindered by the lack of convexity of the unit ball. 
When $p>1$, bisection offers a slower alternative to the Newton methods for finding the root of $g'(\mu)$, the derivative of the dual objective function $g(\mu)$ in problem \eqref{eqn:dual}. The last expression in equation \eqref{eqn:gderiv1} shows that
bisection on $g'(\mu)$ is equivalent to that on the norm function $\|\bx^{\star}(\mu)\|_p$.
This function is continuously monotone decreasing, and we simply need to find the $\mu$ that corresponds to $\|\bx^{\star}(\mu)\|_p = 1$. 
For $p<1$, the delicacy lies in that solution sets $P_{B_p}(\by)$ and the associated $\bx^{\star}(\mu)$ can be multi-valued. Further, the dual function $g(\mu)$ is no longer smooth.

Fortunately, $g(\mu)$ is concave and single-valued even if $\bx^{\star}(\mu)$ is multi-valued, so that $g(\mu)$ always has well-defined directional derivatives. 
For concreteness, we define the radius
function 
\begin{equation}\label{eqn:radius}
r(\mu) = \max \{\|\bx^{\star}(\mu)\|_p : \bx^{\star}(\mu) \in \argmin_{\bx} \mathcal{L}(\bx,\mu)\}.
\end{equation}
It will be also convenient to define the maximum proximal operator of $\mu s_p$:
\[
	\max\prox_{\mu s_p}(y) = \max\{u: u \in \prox_{\mu s_p}(y)\}
	.
\]
Recall that $z_p(y)$ is the root of $0 = z - y + z^{p-1}$ greater than $m_p$. 
Since $z_p(y) > m_p$,
the function $\max \prox_{s_p}(y)$ is single-valued and right-continuous. It takes a jump of size $\kappa_p$ at $y=r_p$, and is strictly increasing from there. 
On the other hand,
from relation \eqref{eqn:proxscaling} and Proposition \ref{prop:prox1}, 
the map $\mu \mapsto \max \prox_{\mu s_p}(y)$ when $\mu > 0$ and $y > 0$ is held fixed is decreasing and left-continuous, with a discontinuity only at $\mu = (y/r_p)^{2-p}$, before which it is strictly decreasing on $(0, (y/r_p)^{2-p})$.

Since $\bx^{\star}(\mu) = (x_1^{\star}(\mu), \dotsc, x_d^{\star}(\mu))$ and $x_i^{\star}(\mu)=\prox_{\mu s_p}(y_i)$,
as $\mu$ tends to $\infty$ from $0$, $r(\mu)$ monotonically decreases from $\|\by\|_p$ down to $0$, and
is left-continuous.
Let $\bar{\mu}$ denote a discontinuity in $\bx^{\star}(\mu)$. Each $x_i^{\star}(\mu)$, and in turn $g(\mu)$, is single-valued and differentiable both at $\bar{\mu}-\epsilon$ and $\bar{\mu}+\epsilon$ 
for sufficiently small $\epsilon>0$.
Therefore, the subdifferential satisfies
\[
	\partial g(\bar{\mu}) = \left[\big(\lim_{\epsilon\downarrow 0}r(\bar{\mu}+\epsilon)-1\big)/p, \big(\lim_{\epsilon\downarrow 0}r(\bar{\mu}-\epsilon)-1\big)/p \right]
	,
\]
and we see that the inclusion $1 \in [\lim_{\epsilon\downarrow 0}r(\mu+\epsilon), \lim_{\epsilon\downarrow 0}r(\mu-\epsilon)]$ is necessary and sufficient for maximizing $g(\mu)$.
Bisection is therefore  guaranteed to return a sufficiently small interval $[\mu-\delta, \mu+\delta]$ such that $1 \in [r(\mu + \delta), r(\mu - \delta)]$ for any $\delta> 0$. 

As strong duality is no longer guaranteed, a final remaining concern is the possibility of nonzero duality gap. The following proposition shows that this possibility is rare.  
\begin{proposition}\label{prop:bisection}
Let $\omega = \inf \{\mu: r(\mu) \le 1\}$. 
Suppose $1 \in \nu(\omega)$ for the multi-valued norm function $\nu(\mu)=\{\|\bx^{\star}\|_p:
\bx^{\star} \in \argmin_{\bx} \mathcal{L}(\bx,\mu)\}$.
Then the point $\bx^{\star}(\omega) \in \argmin_{\bx}\mathcal{L}(\bx,\omega)$ with $\|\bx^{\star}(\omega)\|_p = 1$ 
solves \eqref{eqn:projection}.
\end{proposition}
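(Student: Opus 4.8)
The plan is to invoke the Lagrangian sufficiency principle, which requires no convexity and therefore applies for every $p>0$. Write $\bx^{\star}:=\bx^{\star}(\omega)$ for the element of $\argmin_{\bx}\mathcal{L}(\bx,\omega)$ that the hypothesis $1\in\nu(\omega)$ supplies with $\|\bx^{\star}\|_p=1$. I would first assemble three elementary facts. (i) $\bx^{\star}$ is feasible for \eqref{eqn:projection}, since $\|\bx^{\star}\|_p=1$ and $r=1$. (ii) Complementary slackness holds at $(\bx^{\star},\omega)$: because $\|\bx^{\star}\|_p^p=1$, the term $\frac{\omega}{p}(\|\bx^{\star}\|_p^p-1)$ vanishes. (iii) $\omega\ge 0$; here I would use that $r(\mu)$ is nonincreasing with $r(0)=\|\by\|_p>1$ (and $r(\mu)\ge\|\by\|_p$ for $\mu\le 0$, as moving $\mu$ below $0$ only pushes the maximum proximal value outward), so that $\{\mu:r(\mu)\le 1\}\subseteq(0,\infty)$ and hence $\omega>0$.

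The conclusion then follows from a single chain of inequalities. For any feasible $\bx$, i.e., any $\bx$ with $\|\bx\|_p\le 1$ (equivalently $\|\bx\|_p^p\le 1$),
\begin{align*}
	f_0(\bx^{\star})
	&= f_0(\bx^{\star}) + \frac{\omega}{p}\big(\|\bx^{\star}\|_p^p - 1\big)
	= \mathcal{L}(\bx^{\star},\omega) \le \mathcal{L}(\bx,\omega)
	\\
	&= f_0(\bx) + \frac{\omega}{p}\big(\|\bx\|_p^p - 1\big) \le f_0(\bx),
\end{align*}
where the first equality is (ii), the middle inequality is the global minimality of $\bx^{\star}$ for $\mathcal{L}(\cdot,\omega)$ over $\Real^d$ (fact (i) being used to guarantee $\bx^{\star}$ lies in the feasible set we compare against), and the last inequality combines (iii) with $\|\bx\|_p^p\le 1$. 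Thus $\bx^{\star}$ attains the minimum of $f_0$ over the unit $\ell_p$ ball and solves \eqref{eqn:projection}.

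I do not anticipate a real obstacle: the entire content of the proposition is carried by the hypothesis $1\in\nu(\omega)$, which rules out exactly the ``jump straddling $1$'' configuration described just before the statement (where $\nu(\omega)=\{a,b\}$ with $a<1<b$), the sole mechanism producing a positive duality gap in this problem. The one point deserving care is that $\bx^{\star}(\omega)$ must be invoked as a genuine \emph{global} minimizer of the Lagrangian — which is precisely how $\argmin_{\bx}\mathcal{L}(\bx,\omega)$, and hence $\nu$ and $r$, were defined — rather than as a mere stationary point; for $p\in(0,1)$ a stationary point of $\mathcal{L}(\cdot,\omega)$ need not validate the crucial middle inequality above.
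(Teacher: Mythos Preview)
Your proof is correct and follows essentially the same approach as the paper: the core is the Lagrangian sufficiency chain $f_0(\bx^{\star}(\omega))=\mathcal{L}(\bx^{\star}(\omega),\omega)\le\mathcal{L}(\bx,\omega)\le f_0(\bx)$ for any feasible $\bx$. You are more explicit than the paper about the sign of $\omega$ (the paper takes $\mu\ge 0$ as baked into the dual domain), though your aside about $\mu<0$ is not quite well-posed since $\mathcal{L}(\cdot,\mu)$ is unbounded below there; it is simpler to note that $r(\cdot)$ is only defined on $[0,\infty)$ and $r(0)=\|\by\|_p>1$.
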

\begin{proof} 
	Recall $\|\bx^{\star}(\omega)\|_p=1$.		
	For any $\bx$ satisfying $\|\bx\|_p \le 1$, we have 
\[
g(\omega) 
\amp = \amp
\mathcal{L}[\bx^{\star}(\omega),\omega] 
\amp = \amp
\frac{1}{2}\|\by-\bx^{\star}(\omega)\|_2^2 
\amp \le \amp \mathcal{L}(\bx,\omega) 
\amp \le \amp \frac{1}{2}\|\by-\bx\|_2^2
.
\]
Hence, $\bx^{\star}(\omega) \in P_{B_p}(\by)$
and the duality gap is zero.
\end{proof}

Bisection can thus be understood as finding the $\omega$ defined in this proposition.
If $r(\omega) = 1$, then the projection problem is solved. Assuming 
$r(\omega) < 1$ contradicts the left-continuity of $r(\mu)$ and the definition of $\omega$. 
Hence, $r(\omega)> 1$ and $r(\omega+\epsilon) < 1$.
Repeating the argument of Proposition \ref{prop:bisection}
yields the estimate
\begin{eqnarray*}
\frac{1}{2}\|\by-\bx^{\star}(\omega)\|_2^2 
+ \frac{r(\omega)^p - 1}{p}& \le & \frac{1}{2}\|\by-\bx\|_2^2 
\end{eqnarray*}
for all $\bx \in P_{B_p}(\by)$. Thus a small gap $r(\omega)-1$ implies a good approximation to a projected point---this is what we
find in practice.
Algorithm \ref{alg:bisection} provides pseudocode for the bisection method.
\begin{algorithm}[h]\small
\caption{Dual bisection}\label{alg:bisection}
  \begin{algorithmic}
   \STATE \textit{Input:} $\by > 0$ with $\|\by\|_p>1$, $p>0$
   \STATE $q^* \gets p / (p - 1)$ if $p \neq 1$, $q^* = \infty$ if $p = 1$
   \STATE $(\mu_l, \mu_r) \gets (0, \|\by\|_{q^*})$
   \REPEAT
   		\STATE $\mu_m \gets (\mu_l+\mu_r)/2$
		\IF[eq.  \eqref{eqn:radius}]{$(r(\mu_l)-1)(r(\mu_m)-1) < 0$} 
			\STATE $\mu_r \gets \mu_m$
		\ELSE
			\STATE $\mu_l \gets \mu_m$
		\ENDIF
   \UNTIL{convergence}
   \STATE $\bx^{\star} \gets (\prox_{\mu_m s_p}(y_1),\dotsc,\prox_{\mu_m s_p}(y_d))^T$
   \COMMENT{eq. \eqref{eqn:prox1}}
   \RETURN{$\bx^{\star}$}
  \end{algorithmic} \vspace{.2pt}
\end{algorithm}

 \section{Related work}\label{sec:review}

\subsection{Double bisection method for $p > 1$}
In \cite{liu2010efficient}, 
the $\ell_{1,q}$ proximal problem arising from $\ell_{1,p}$ multitask learning ($p>1$, $1/p+1/q=1$)
is studied:
\[ 
	\min_{\bZ \in \Real^{d\times s}} \frac{1}{2} \lVert \bZ - \bV \rVert_2^2 + r \sum_{i=1}^s \| \bz_i \|_q
,
\] 
where $\bz_i$ is the $i$th column of matrix $\bZ$.
Note that this splits into $s$ independent evaluations of the
proximal operator 
$\prox_{r\|\cdot\|_q}(\by)$
for solving \eqref{eqn:fenchel}.
If $\by>\mathbf{0}$ and $r=1$,
the optimal solution $\bz^\ast$ to \eqref{eqn:fenchel} satisfies the stationary condition
\begin{equation}\label{eqn:fenchelstationary} 
	\bz^\ast - \by +  \| \bz^\ast \|_q^{1-q} \bz^{\ast (q-1)} = \mathbf{0} , 
\end{equation}
where $\bu=\bx^{(q-1)}$ is defined elementwise by $u_i=\sgn(x_i)|x_i|^{q-1}$.

The value of the optimal multiplier $c^\ast = \| \bz^\ast \|_q^{1-q}$ is unknown, so it is proposed to determine $c^\ast$ by finding the root to the auxiliary function
\begin{equation}\label{eqn:aux1} 
	\phi(c) = \psi(c) - c , \quad c \geq 0, 
\end{equation}
where $\psi(c) = \Big[ \sum_{i=1}^n ( \omega_i^{-1} (c) )^q \Big]^{\frac{1-q}{q}}$ with $\omega_i(z) = \frac{y_i - z}{z^{q-1}}$,  $0 < z \leq y_i$.
It is shown that $\phi(c)$ 
has a unique root in 
$[\min_{i=1,\dotsc,d}\gamma_i, \max_{i=1,\dotsc,d}\gamma_i]$ where
\begin{equation}\label{eqn:bisectintv}
	\gamma_i = \frac{1-\epsilon}{\epsilon^{q-1} y_i^{q-2}},
	\qquad
	\epsilon = \frac{\|\by\|_p - 1}{\|\by\|_p} \in (0, 1).
\end{equation}
Hence its root is found by bisection. Evaluation of $\phi(c)$ requires inversion of the third auxiliary function $\omega_i(z)$ for each $i$, which is accomplished by finding the root of yet another auxiliary, monotone function 
\begin{equation}\label{eqn:aux4}
    h_c^{y_i}(z) = z - y + c z^{q-1} 
\end{equation}
by bisection.
This constitutes a nested or double bisection algorithm.

For $p>1$, it is now clear that the root of the auxiliary function \eqref{eqn:aux4} of \cite{liu2010efficient}
is equivalent to equation \eqref{stationarity_eq} with substitution $p\gets q$ and $\mu\gets c$, i.e., $\prox_{cs_q}(y_i)$. Thus we have also shown that at least one of the nested bisection routines for solving \eqref{eqn:fenchel} \cite{liu2010efficient} can be replaced by the faster Newton method (Algorithm \ref{alg:lpprox}). 
The other root finding of the more esoteric auxiliary function $\phi(c)$ is related 
to solving
\[
    \|\by - c^2\bx^{\star}(c^{1-p})\|_q^{1-q} = c
\]
via
Moreau's identity, $$\prox_{cs_q}(y_i) = y_i - c\prox_{c^{-1}s_p}(y_i/c) = y_i - c^2 x_i^{\star}(c^{1-p}).$$
In solving this equation via bisection, the interval \eqref{eqn:bisectintv} involves both $1/y_i^{q-2}$ and $(\|\by\|_p-1)/\|\by\|_p$, so when $p$ is large ($q$ is close to 1), initial values may run into numerical difficulties.
In contrast, our more transparent approach  solves the dual optimality condition $g'(\mu)=0$ for \eqref{eqn:dual}, or equation
\begin{equation}\label{eqn:rootfinding}
    \|\bx^{\star}(\mu)\|_p^p = 1 
    ,
\end{equation}
in the straightforward interval $[0, \|\by\|_p]$.
The left hand side is twice continuously differentiable with respect to $\mu$, and so Newton's method (Algorithm \ref{alg:newton}) efficiently solves this equation.
So when $p>1$, our approach constitutes a ``double Newton'' method. If $p<1$, the outer Newton is replaced with bisection; the inner Newton 
for $\prox_{s_p}(\cdot)$ remains intact.

\subsection{Projected Newton for $p > 1$}

Assuming again $\by > \mathbf{0}$ and $r=1$ without loss of generality, \cite{barbero2018} proposes to solve \eqref{eqn:fenchel} together with a redundant constraint $\bz \ge \mathbf{0}$
using projected Newton \citep{bertsekas1982projected}.
Let $\tilde{g}$ denote the objective of \eqref{eqn:fenchel}.
An inactive component of $\bz$ is defined as the $i$th component with either $z_i>0$ or $z_i=0$ but $\nabla_i \tilde{g}(\bz)<0$.
If we denote the set of these by $I$, 
$\tilde{g}$ is differentiable within $I$: 
\begin{equation}\label{eqn:projnewt0}
	\begin{split}
    \nabla_I \tilde{g}(\bz) &= \bz_I - \by_I + \bar{\bz}_I, 
	\qquad
	\bar{\bz}=(\bz/\|\bz\|_q)^{q-1},
    \\
    \nabla^2_I \tilde{g}(\bz) &= \text{diag}(\bv_I) + c\bar{\bz}_I\bar{\bz}_I^T,
	~~
	\bv=\mathbf{1} - c(\bz/\|\bz\|_q)^{q-2}, 
	~
	c=(1-p)\|\bz\|_q^{-1}
    ,
	\end{split}
\end{equation}
where $\nabla_I$ and $\nabla^2_I$ denote the gradient and Hessian restricted within $I$, respectively, and
$\bv_I$ refers to the subvector of $\bv$ indexed by $I$.
The Newton update of inactive components $\bz_I$ is then projected onto the nonnegative orthant:
\begin{equation}\label{eqn:projnewt}
    \bz_I := \left[ \bz_I - t\left(
    \bv_I^{-1}\odot\nabla_I g(\bz)
    - \frac{(\bv_I^{-1}\odot\bar{\bz}_I)(\bv_I^{-1}\odot\bar{\bz}_I)^T\nabla_I g(\bz)}
    {1/c + \bar{\bz}_I^T(\bv_I^{-1}\odot\bar{\bz}_I)}
    \right)
    \right]_{+}
    ,
\end{equation}
where $(\cdot)^{-1}$ and $\odot$ denote elementwise inverse and multiplication, respectively; $[\bx]_+=\max(\bx,\mathbf{0})$ componentwise.
The step size $t$ is chosen by using a backtracking line search. 

Per-iteration complexity of the projected Newton method \eqref{eqn:projnewt} is lower than the univariate dual method \eqref{eq:newton_iterates}, since the latter requires Algorithm \ref{alg:lpprox} internally to evaluate $x_i^{\star}(\mu)$. 
However, the construction of the restricted Hessian \eqref{eqn:projnewt0} indicates that the curvature of the objective of \eqref{eqn:fenchel} near the boundary ($\bz=\mathbf{0}$) can become extreme. 
This suggests numerical instability and possible overflows. 
The reference implementation\footnote{Available at \url{https://github.com/albarji/proxTV/blob/master/src/LPopt.cpp}} by the authors of \cite{barbero2018} faces this problem by adding several \textit{ad hoc} safeguards, including a switch to gradient descent. 
Despite this, we encountered numerical inaccuracies using projected Newton in our experiments under large values of $p$; see \Cref{sec:performance}.

\subsection{Bisection method of Chen et al. \citep{chen2019outlier} for $p < 1$}

A referee pointed out potential similarities between the work of Chen, Jiang, and Liu \citep{chen2019outlier}  and our univariate dual method. 
As a subproblem of a low-rank matrix decomposition problem,
\citet[Sect. IV-A]{chen2019outlier}  considers problem \eqref{eqn:projection} and arrives at equation \eqref{eqn:rootfinding} via the Karush–Kuhn–Tucker (KKT) conditions derived from the Lagrangian of the reformulated primal \eqref{eqn:notsimple}. 
This work also suggests finding the root of equation \eqref{eqn:rootfinding} by bisection. Regarding the associated proximal map $\prox_{\mu s_p}(y_i)$ for $p < 1$, \citet{chen2019outlier} considers equation \eqref{stationarity_eq} directly from the KKT conditions. Since finding the root of equation \eqref{stationarity_eq} alone is not sufficient for fully evaluating $\prox_{\mu s_p}(y_i)$, an additional heuristic is developed \citet[Theorems 3 and 4]{chen2019outlier}.

While close to our approach, the work of Chen et al. ignores that solving \eqref{eqn:rootfinding} is in fact equivalent to solving the dual \eqref{eqn:dual}.
As a result, \citet{chen2019outlier} fails to capitalize that an (outer) Newton method can be employed to yield much faster convergence for $p > 1$ (see \Cref{prop:newtonrate}), while on the other hand when $p<1$, misses 
discontinuity of the target function $g'(\mu)$ as well as the possibility of nonzero duality gap.
Our inspection of the dual problem \eqref{eqn:dual} also 
brings focus to
the map $\prox_{\mu s_p}(\cdot)$, 
which is well-studied for $p < 1$ and more principled than solely analyzing equation \eqref{stationarity_eq}.
In fact, Theorems 3 and 4 of \citet{chen2019outlier} are subsumed by \Cref{prop:prox1} due to \citet{marjanovic2012}, which predates \citet{chen2019outlier} and our present paper by several years.

\subsection{MM algorithms for $p < 1$}
While this letter is under review, 
an iterative re-weighted $\ell_1$-ball projection (IRBP) algorithm
has been posted online as a preprint \citep{yang2021towards}. 
The main idea behind the IRBP algorithm is to ``smooth'' the nonconvex unit $\ell_p$ norm ball $B_p=\{\bx=(x_1, \dotsc, x_d): \sum_{i=1}^d|x_i|^p \le 1\}$ by
$$
    B_{p,\bepsilon} = \{\bx: \sum_{i=1}^d|x_i + \epsilon_i|^p \le 1\}
$$
for $\bepsilon = (\epsilon_1, \dotsc, \epsilon_d) > \mathbf{0}$
(recall that we set $r=1$),
and iteratively relax $B_{p,\bepsilon}$ by a weighted $\ell_1$ norm ball
$$
    r_n B_{1, w_n} = \{\bx: \sum_{i=1}^d w_{n,i}|x_i| \le r_n\}
$$
for the $n$-th iterate $\bx_n=(x_{n,1},\dotsc,x_{n,d})$. Here
$r_n = 1 - \sum_{i=1}^d(x_{n,i} + \epsilon_i)^p + \sum_{i=1}^d w_{n,i}|x_{n,i}|$
and
$w_{n,i} = p(|x_{n,i}|+\epsilon_i)^{p-1}$.
The next iterate is obtained by minimizing $f_0(\bx)=\frac{1}{2}\|\bx-\by\|_2^2$ on $r_n B_{1, w_n}$, which can be efficiently solved by a trivial modification of (unweighted) $\ell_1$-ball projection algorithms, e.g., \citep{duchi2008,condat2016fast}. 
The authors of \citet{yang2021towards} show that, for a certain dynamic update strategy for $\bepsilon$, every cluster point of the iterate $\{\bx_n\}$ is a stationary point of problem \eqref{eqn:projection}.

We note that IRBP is an instance of majorization-minimization (MM) algorithms \citep{lange16}. With the smoothed $\ell_p$ norm ball we aim to minimize $f_{\bepsilon}(\bx) \triangleq f_0(\bx) + \iota_{B_{p,\bepsilon}}(\bx)$, which is a perturbed objective for the unconstrained problem \eqref{eqn:simple} that is equivalent to the original problem \eqref{eqn:projection}. 
For each iterate $\bx_n$, the surrogate function
$$
    g(\bx|\bx_n) = f_0(\bx) + \iota_{r_nB_{1,w_n}}(\bx)
$$
majorizes $f_{\bepsilon}(\bx)$ at $\bx_n$, i.e., $g(\bx|\bx_n) \ge f_{\bepsilon}(\bx)$ for all $\bx$ and $g(\bx_n|\bx_n) = f_{\bepsilon}(\bx_n)$, since $r_nB_{1,w_n} \subset B_{p,\bepsilon}$ and $\iota_{B_{p,\bepsilon}}(\bx_n) = \iota_{r_nB_{1,w_n}}(\bx_n) = 0$.
By iteratively minimizing the surrogate function and driving $\bepsilon \downarrow \mathbf{0}$, the unperturbed problem \eqref{eqn:simple} is expected to be solved.

IRBP
is a primal algorithm as opposed to our dual bisection algorithm. 
With a proper scheduling for driving the $\bepsilon$ down to zero, IRBP converges to a feasible stationary point of the primal \eqref{eqn:projection} from any feasible initial point.
Optimality of the convergent stationary point depends on the choice of the initial point.
On the other hand,
since the dual objective $g(\mu)$ is concave, the dual bisection method can find 
the dual optimum 
from any initial point. 
If the duality gap is zero, or equivalently the root of \eqref{eqn:rootfinding} exists, then the 
the primal optimum
is found (\Cref{prop:bisection}). 
However,
if the duality gap is positive or $g'(\mu) - 1$ has a sign-changing discontinuity, then 
the primal solution recovered from the dual optimum may not even be feasible,
contrary to IRBP.
Although we have found that this possibility is rare in practice (see the next section), examples exhibiting nonzero duality gap do exist (see Example 5.1 of \citet{yang2021towards}).
In this case, rescaling the primal solution by its $\ell_p$ norm results in a feasible point.
Although there is no guarantee that the stationary conditions are met,
our experience tells that this rescaling often yields satisfactorily small primal objective values.
 \section{Empirical results}\label{sec:performance}
\subsection{Multi-task learning}
Our empirical assessment of the proposed methods begins with an application to multi-task learning under $\ell_{1,p}$ regularization. 
Let $\bA \in \Real^{m \times d}$ be a design matrix containing data with feature dimension $d$, and $\bY \in  \Real^{m \times k}$ be the matrix of response variables, where the columns are observations corresponding to $k$ tasks. We seek the matrix $\bB \in \Real^{d \times k}$ with rows denoted  $\bB_{(i,\cdot)}$ as the solution to 
\begin{equation}
 \argmin_{\bB \in \Real^{d \times k} } \frac{1}{2} \lVert \bA \bB - \bY \rVert_F^2  + \tau \sum_{i=1}^d \lVert \bB_{(i,\cdot)} \rVert _p .
\label{eq:multitask}
\end{equation}
The second term promotes row-wise sparsity by way of an $\ell_{1,p}$ norm, and coincides with the usual group lasso when $p=2$.   \citet{zhang2010} motivates other choices of $p$ implying different ``group discounts" to the loss, showing that proper choice of $p \in (1, \infty] \setminus \{2, \infty\}$ can significantly improve performance.  \citet{liu2010efficient} confirms this finding and develops a more efficient double-bisection algorithm mentioned above. This is used to evaluate proximal maps for $\ell_{1,p}$ norms in the popular package SLEP \cite{liu2011slep}, and more recently by \citet{sra2012}, \citet{vogt2012}, and \citet{zhou2015ell1p} within a proximal gradient algorithm for fitting \eqref{eq:multitask}. 

Following the data generation in \cite{zhou2015ell1p}, we show that replacing double-bisection by our dual Newton ascent (Algorithm 2) reduces runtime by orders of magnitude. 
We draw entries of the covariate matrix $\bA$ as standard Gaussians shifted to have mean $2$. 
We choose $10$ rows (groups) of the true $d$-by-$k$ coefficient matrix   $\bB^\ast$ to be nonzero, drawn as standard Gaussian vectors. 
Then $\bY = \bA \bB^\ast + \bZ$ where entries of $\bZ$ are independent zero mean Gaussian with standard deviation $0.1$. 
The projection tolerances for all instances in the multi-task learning example are set to $10^{-6}$, with iteration limit $5000$ per projection step. We note that for $p \approx 5$ and above, the double bisection approach reaches the maximum iteration limit in many instances. We focus on efficiency as all methods run within the same outer proximal gradient algorithm, and reach identical solutions up to specified relative tolerance criterion of $10^{-3}$. Due to runtime considerations of the competing method, we fix $\tau$ at a constant that scales with the product of $d \times p$ rather than choose by cross-validation for each trial. Our algorithm and the competing method are initialized at identical starting value, obtained by adding another standard Gaussian to the true solution. The proximal gradient step size is set to $1/2$ divided by the largest eigenvalue of $\bA^T\bA$.

Fig. \ref{fig:timing} shows that as curvature increases with $p$, the method of \cite{liu2010efficient} struggles even at small scales, while our method consistently terminates in a fraction of a second. Methods are run under matched relative tolerance and reach identical solutions at convergence. 
The right panel shows that our algorithm remains tractable when the dimension times the number of tasks reaches millions, converging in several minutes consistently over a wide range of choices $p$.

 \subsection{High-dimensional projections}
Inspired by their success in the context of multi-task learning, we now examine the runtime, accuracy, and scalability of the proposed methods more closely.
We consider projecting onto $\ell_p$ balls in dimension $d=1,000,000$.
Having already established the limitations of the double-bisection method, we compare our dual Newton ascent and bisection methods to the projected Newton \citep{barbero2018} for $p > 1$,
and to the IRBP method \citep[Algorithm 1]{yang2021towards} for $p < 1$.
Since the IRBP method involves a crucial subproblem of projection onto a weighted $\ell_1$ ball, we employ three implementations of weighted $\ell_1$-ball projection:
	\begin{itemize}
	\item IRBP1: weighted version of Condat's algorithm \citep{condat2016fast}, in which catastrophic cancellation is avoided at the expense of computational complexity;
	\item IRBP2: weighted version of Duchi et al's algorithm \citep{duchi2008};
	\item IRBP3: reference implementation by the authors of \cite{yang2021towards},\footnote{Available at \url{https://github.com/Optimizater/Lp-ball-Projection}} which employs an alternating projection method.
	\end{itemize}

Additionally, a na{\"i}ve method making use of the nearest available exact projection, choosing the $\ell_\infty$ ball for $p \in (4,\infty)$, the $\ell_2$ ball for $p \in (\frac{3}{2},4]$, the $\ell_1$ ball for $p \in (\frac{1}{2},\frac{3}{2}]$, and the $\ell_0$ ball for $p \in (0,\frac{1}{2}]$ is compared; these nearest exact projections are then scaled to observe the $\ell_p$-ball constraint.

Convergence of the algorithms are declared as follows.
For the dual Newton ascent (Algorithm 2), convergence is declared 
when the distance $\text{obj}_n = \|\by-\bx_n\|_2$ of the current iterate $\bx_n$ to $\by$ satisfied the inequality 
\begin{eqnarray*}
|\text{obj}_{n} - \text{obj}_{n-1}| & < & 10^{-12}(1+\text{obj}_{n-1}).
\end{eqnarray*}
For bisection, Algorithm 3 is run until $\mu_{r}-\mu_{l} < 10^{-12}$ and either of the following criteria is met:
\[
	r(\mu_{l}) - r(\mu_{r}) < 10^{-7}(1 + r(\mu_{l}))
	\quad\text{or}\quad
	\mu_{r} - \mu_{l} < \epsilon_{\text{mach}} \mu_{r}
	,
\]
where $\epsilon_{\text{mach}}\approx 2.22\times 10^{-16}$ is the machine epsilon. These criteria are needed to cope with the discontinuity of the function $r(\mu)$.
The convergence criteria for the projected Newton \citep{barbero2018} 
and IRBP \cite{yang2021towards} follow the reference implementations, whose URLs are provided in the footnotes at the end of \Cref{sec:convex} and in this subsection. 

All the simulations were run on a Linux machine with an Intel Xeon E5-2650 v4 CPU @ 2.20GHz with 12 cores. A single core was used for each value of the power $p$. The code was written in the Julia programming language, except the projected Newton for which the complied C++ reference implementation was directly called from Julia, 
and IRBP3 for which the reference implementation in Python was  called via \texttt{PyCall.jl}.\footnote{Available at \url{https://github.com/JuliaPy/PyCall.jl}.} 

Results under several performance measures are reported in 
\Cref{table_cvx,table_noncvx}.
The components of each exterior point to be projected, $\by$, were sampled as independent standard normal entries. The radius $r$ of the $\ell_p$ ball for a given $\by$ was chosen uniformly from $(0,\|\by\|_p)$. All performance measures for a given $p$ and method represent averages over $100$ independent trials. The range of powers $p$ considered are designed to elicit both typical and extreme behavior. 
Runtime is assessed via number of iterations as well as elapsed time in seconds until convergence. 
Since the radius $r$ varies widely across the sampled external points, all the performance measures except runtime were computed after normalizing the coordinates, i.e., $x_i^{\star} \gets x_i^{\star}/r$, $y_i \gets y_i/r$, $i=1,\dotsc, d$. 

The objective value 
(``obj'') at convergence
must be considered together with the KKT measures, defined as follows. 
The ``KKT1'' measure is the sum of absolute values of the right-hand side of equation \eqref{stationarity_eq} for $i=1, \dotsc, d$: 
\begin{equation}\label{eqn:computedmu}
	\text{KKT1} = \sum_{i=1}^d \left|x^{\star}_i - y_i + \mu^{\star}|x^{\star}_i|^{p - 1}\sgn(y_i) \right|
	.
\end{equation}
The univariate dual methods (\Cref{alg:newton,alg:bisection}) compute the dual optimal variable $\mu^{\star}$; this measure can be directly calculated. However, other methods (na{\"i}ve and projected Newton) do not generate this dual variable, and  hence $\mu^{\star}$ is estimated by the formula
\[
	\mu^{\star} = (\by^T\bx^{\star} - \|\bx^{\star}\|_2^2)/\Vert\bx^{\star}\Vert_p^p
\]
(see equation \eqref{eqn:stationarycond} in \Cref{sec:proximal:objective}).
This computed dual variable may be negative, but we nevertheless computed the KKT1 measure. 
If $|x_i^{\star}|$ 
is very small (we used the threshold of $10^{-12}$) but $y_i$ 
is not, then $|x_i^{\star}|^{p-1}$ 
is approximated by $|(\mu^{\star})^{-1}y_i|$ 
(see equation \eqref{eqn:safe} in \Cref{sec:prox:convex});
this scenario is  encountered usually when $p$ is close to one.
This correction is also valid for the non-convex case ($0 < p <1$), in which equation \eqref{stationarity_eq} may not hold for every coordinate (see \Cref{rem:proximal} and \Cref{sec:prox:nonconvex}).

It may be argued that KKT1 is favorable to the univariate methods, since the projected Newton solves a different dual problem \eqref{eqn:fenchel}. For this reason, another KKT measure (``KKT2'') quantifies the deviation from the optimality condition \eqref{eqn:fenchelstationary} of problem \eqref{eqn:fenchel}: 
\[
	\text{KKT2} = \sum_{i=1}^d\left| z^{\star}_i - y_i + (|z^{\star}_i| / \|\bz^{\star}\|_q)^{q - 1}\sgn(y_i) \right|
	,
	\quad
	q = \frac{p}{p-1},
\]
which is valid only for $p \ge 1$,
where $\bz^{\star} = \by - \bx^{\star}$ is the computed optimal variable of problem \eqref{eqn:fenchel}.
A similar numerical caution is warranted if $|z_i^{\star}|$ 
is infinitesimally small, but this case usually occurs when $q$ is close to one, or $p$ is large. 
As a related measure, the ratio $\frac{1}{r}\|\bx\|_p$ informs whether the projected point $\bx$ falls on the surface of the $rB_p$; its difference from $1$ measures the duality gap provided that the KKT measures are small (see the discussion below \Cref{prop:bisection}).
All measures greater than $10^{10}$ are marked by $\infty$.

For $p < 1$, instead of KKT2 we measure the duality gap 
$$
    f_0(\bx^{\star}) - g(\mu^{\star})
    .
$$
If the  $\mu^{\star}$ in \eqref{eqn:computedmu} is negative, then $x_i(\mu^{\star})=\prox_{\mu^{\star}s_p}(y_i)$ is undefined, hence we set $g(\mu^{\star})=\texttt{NaN}$ and count the number of trials yielding \texttt{NaN}s.
Note that, if $\bx^{\star}$ is infeasible, this metric may be misleading.

\begin{figure}
\centering
\includegraphics[width=0.45\textwidth]{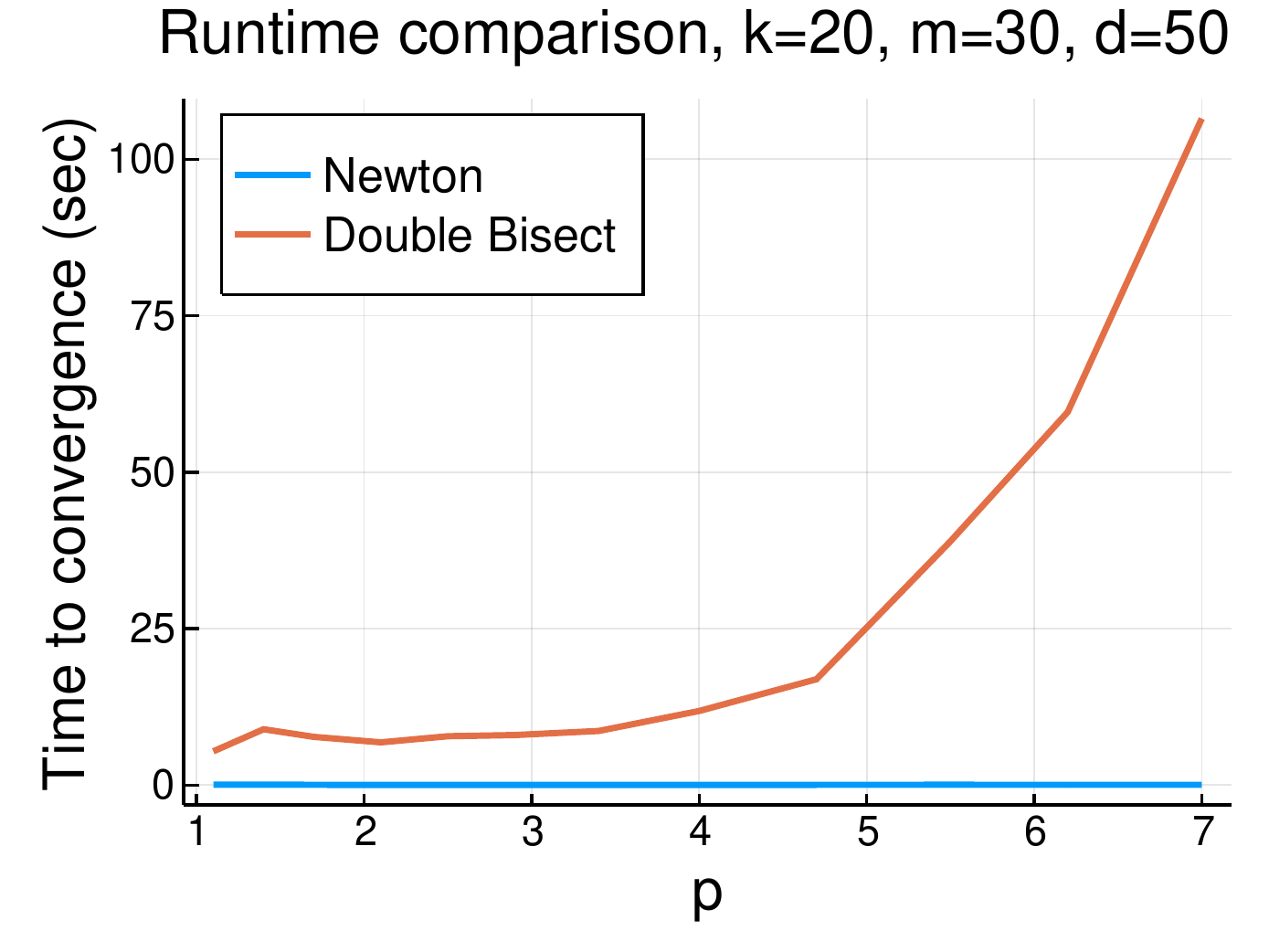} \includegraphics[width=0.45\textwidth]{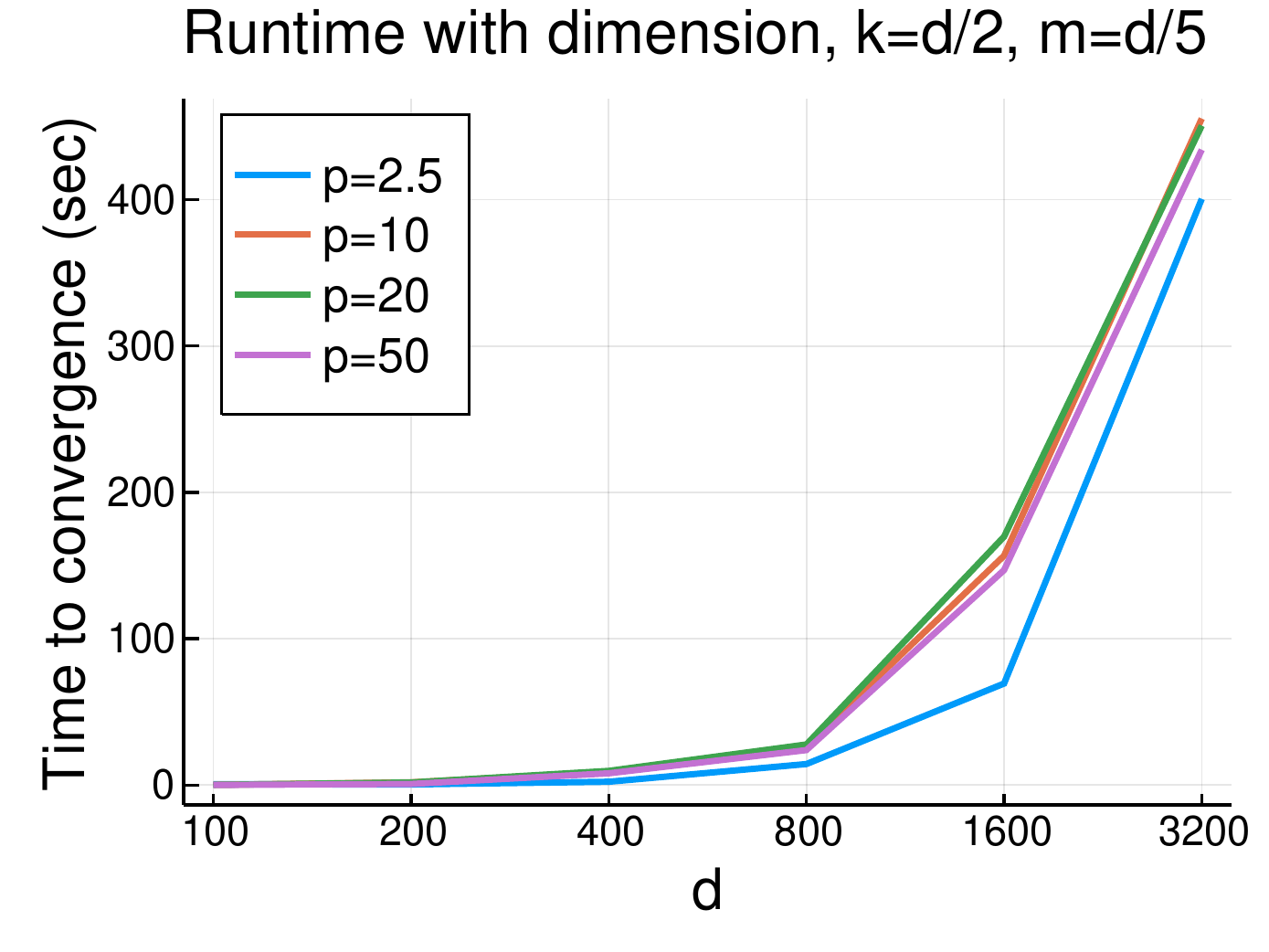}
\caption{Prior method used in SLEP slows as $p$ grows even on moderate problem size; our method scales to $d,k$ in the thousands consistently over a wide range of $p$.}
\label{fig:timing}
\end{figure}

Our results in \Cref{table_cvx} indicate that over a wide range of $p>1$, the dual Newton method successfully computes the projections with accuracy comparable to that of bisection in a fraction of the runtime. 
The projected Newton is faster, which may be expected due to its C++ implementation as opposed to Julia. However, both KKT measures and constraint violation are at least an order of magnitude greater than dual Newton and bisection given similar numbers of iterations to converge, even though the KKT2 measure is favorable to this method by construction. In particular, the accuracy across all measures becomes noticeably worse as $p$ increases, which is anticipated from the discussion in \Cref{sec:convex}.
For $p=1.01$ and $p=100$, the reference implementation of the projected Newton rounds them to $p=1$ and $p=\infty$, respectively.
The resulting accuracy, along with those of
the na{\"i}ve method, 
serves to illustrate the inadequacy of working only with computationally convenient projection operators.

The results for $p \in (0,1)$ are presented in \Cref{table_noncvx}. Recall that in this non-convex setting, only bisection and IRBP are meaningful options. No rescaling for observing the $\ell_p$-ball constraint is employed for either method. Nevertheless, the dual bisection method consistently delivered accurate projections for all the values of $p$ tried, indeed with small duality gaps, supporting the theoretical finding in Proposition \ref{prop:bisection}.
The behavior of IRBP is a bit complicated.
For $p \geq 0.5$ all three versions worked well with outcomes comparable to the dual bisection. (IRBP3 was excluded for $p=0.5$  because it took more than two hours for each trial: 10134, 8358, 17031, 28022 seconds in the first four trials.) IRBP converged in fewer iterations than bisection when $p$ is greater than $0.5$, but constraint violation is at least an order of magnitude greater than bisection as well as the objective values;
this is likely due to the convergence criteria, which followed the reference implementation (IRBP3). So, for this range of $p$, it appears that the two methods are comparable.
For $p < 0.5$, however, both IRBP1 and IRBP3 tended to drive the iterates toward zero, while IRBP2 produced outputs that were the same as the inputs. For neither results we could not call for accuracy.
Finally, it is interesting to note the performance of the na{\"i}ve method when $p$ is less than $0.5$. The objective value was within 0.001\% of bisection, while respecting the norm ball constraint. Not surprisingly, na{\"i}ve solutions for this range of $p$ were not dual feasible.
 
\begin{table}[tbh]
\caption{Average performance of $\ell_p$-ball projection algorithms for $d=1,000,000$ ($p > 1$)}\label{table_cvx} 
\begin{center}
\tiny
\begin{tabular}{lccccccc}
Method & $p$ & Iters & Secs & KKT1 & KKT2 & Obj & $\frac{1}{r}\|\mathbf{x}^{\star}\|_p - 1$ \\
\hline
Naive & 1.01 & 1.000 & 0.1963 & 0.1390 & 2.053 & 503.593 & -2.769e-15 \\
Dual Newton & 1.01 & 4.200 & 12.48 & 3.020e-9 & 1.663e-8 & 494.572 & 1.466e-8 \\
Bisection & 1.01 & 26.83 & 40.23 & 3.020e-9 & 3.725e-8 & 494.572 & 8.673e-10 \\
Projected Newton & 1.01 & 0.000 & 0.4648 & 0.1927 & 0.6943 & 544.706 & -0.1164 \\
\\
Naive & 1.05 & 1.000 & 0.1812 & 0.8063 & 2.946 & 563.572 & 4.741e-16 \\
Dual Newton & 1.05 & 4.120 & 11.99 & 1.187e-8 & 1.764e-8 & 447.537 & 9.759e-9 \\
Bisection & 1.05 & 26.66 & 39.28 & 1.188e-8 & 4.34e-8 & 447.537 & -1.548e-9 \\
Projected Newton & 1.05 & 6.180 & 2.656 & 1.637e-6 & 1.305e-6 & 447.537 & 1.931e-8 \\
\\
Naive & 1.1 & 1.000 & 0.1725 & 1.898 & 4.405 & 745.935 & 1.887e-17 \\
Dual Newton & 1.1 & 4.090 & 10.48 & 1.138e-7 & 1.617e-7 & 462.819 & 6.613e-8 \\
Bisection & 1.1 & 27.78 & 36.28 & 1.134e-7 & 2.962e-8 & 462.819 & -1.961e-9 \\
Projected Newton & 1.1 & 5.410 & 2.366 & 9.094e-5 & 1.053e-6 & 462.819 & 2.115e-8 \\
\\
Naive & 1.5 & 1.000 & 0.1614 & 10.42 & 38.50 & 1291.36 & -1.588e-16 \\
Dual Newton & 1.5 & 4.050 & 8.065 & 5.065e-11 & 7.743e-7 & 502.465 & 9.42e-9 \\
Bisection & 1.5 & 36.66 & 31.62 & 5.061e-11 & 7.786e-10 & 502.465 & 1.946e-13 \\
Projected Newton & 1.5 & 4.220 & 1.936 & 0.03598 & 0.0001776 & 502.465 & 1.548e-7 \\
\\
Naive & 4.0 & 1.000 & 0.1429 & 5.181e4 & 10740.0 & 517.692 & -2.316e-15 \\
Dual Newton & 4.0 & 4.880 & 6.691 & 1.117e-10 & 6.446e-5 & 488.994 & 2.556e-9 \\
Bisection & 4.0 & 61.59 & 42.34 & 1.111e-10 & 3.132e-10 & 488.994 & -1.36e-15 \\
Projected Newton & 4.0 & 6.420 & 2.476 & 0.01578 & 0.07242 & 488.993 & 1.232e-7 \\
\\
Naive & 10.0 & 1.000 & 0.1444 & 8.506e5 & 119200.0 & 505.418 & -2.421e-14 \\
Dual Newton & 10.0 & 6.870 & 13.41 & 7.165e-9 & 0.0003264 & 405.299 & 1.379e-9 \\
Bisection & 10.0 & 216.6 & 284.9 & 7.154e-9 & 4.595e-5 & 405.299 & -2.359e-15 \\
Projected Newton & 10.0 & 10.77 & 3.931 & 0.1228 & 0.5518 & 405.299 & 1.398e-7 \\
\\
Naive & 99.0 & 1.000 & 0.1475 & 2.318e6 & 4.908e5 & 248.933 & 4.453e-15 \\
Dual Newton & 99.0 & 12.03 & 12.78 & 4.423e-8 & 0.004281 & 225.815 & 4.045e-9 \\
Bisection & 99.0 & 218.5 & 208.7 & 4.431e-8 & 0.000912 & 225.815 & 4.796e-16 \\
Projected Newton & 99.0 & 13.18 & 3.775 & 0.1419 & 0.1335 & 225.814 & 1.563e-5 \\
\\
Naive & 100.0 & 1.000 & 0.1459 & 8.748e5 & 5.165e5 & 221.562 & -4.091e-15 \\
Dual Newton & 100.0 & 13.44 & 10.42 & 1.732e-8 & 0.001485 & 196.074 & 8.638e-10 \\
Bisection & 100.0 & 199.8 & 221.7 & 1.867e-8 & 0.0008292 & 196.074 & -3.126e-15 \\
Projected Newton & 100.0 & 0.000 & 0.008133 & 1.32e12 & 1.8970e4 & 176.687 & 0.08643 
\end{tabular}
\end{center}
\end{table}

 \begin{table}[tbh]
\caption{Average performance of $\ell_p$-ball projection algorithms for $d=1,000,000$ ($p < 1$)}\label{table_noncvx} 
\begin{center}
\tiny
\begin{tabular}{lcccccccr}
Method & $p$ & Iters & Secs & KKT1 & Duality gap & Obj & $\frac{1}{r}\|\mathbf{x}^{\star}\|_p - 1$ & \% NaN \\
\hline
Naive & 0.1 & 1.0 & 0.49 & 0.0 & \text{NaN} & 34.24 & 3.611e-14 & 0 \\
Bisection & 0.1 & 243.9 & 43.64 & 0.0 & 4.874e-119 & 34.24 & -4.147e-7 & 100 \\
IRBP1 & 0.1 & 1001.0 & 198.6 & 0.0 & \text{NaN} & 999.9 & -1.0 & 0 \\
IRBP2 & 0.1 & 951.1 & 408.0 & 0.0 & 0.0 & 4.034e-14 & 5.686 & 100 \\
IRBP3 & 0.1 & 1.0 & 4.641 & 0.0 & \text{NaN} & 999.9 & -1.0 & 0 \\
\\
Naive & 0.3 & 1.0 & 0.4496 & 0.0 & \text{NaN} & 157.7 & -1.018e-14 & 0 \\
Bisection & 0.3 & 106.3 & 23.49 & 0.0 & 1.112e-38 & 157.7 & -3.027e-8 & 100 \\
IRBP1 & 0.3 & 1001.0 & 208.7 & 0.0 & \text{NaN} & 999.9 & -1.0 & 0 \\
IRBP2 & 0.3 & 1001.0 & 440.0 & 0.0 & 0.0 & 4.148e-14 & 5.686 & 100 \\
IRBP3 & 0.3 & 1.0 & 5.742 & 0.0 & \text{NaN} & 999.9 & -1.0 & 0 \\
\\
Naive & 0.5 & 1.0 & 0.4509 & 4.541e-12 & \text{NaN} & 286.5 & -6.459e-15 & 0 \\
Bisection & 0.5 & 77.04 & 19.17 & 9.371e-8 & 6.298e-22 & 281.7 & 4.692e-8 & 100 \\
IRBP1 & 0.5 & 804.7 & 229.2 & 3.569e-14 & 4.328e-16 & 363.0 & -0.02082 & 100 \\
IRBP2 & 0.5 & 809.2 & 327.1 & 7.591e-13 & 4.335e-16 & 362.9 & -0.02115 & 10 \\
\\
Naive & 0.7 & 1.0 & 0.2217 & 0.0137 & 2.084e-9 & 762.3 & 3.197e-15 & 100 \\
Bisection & 0.7 & 52.3 & 20.94 & 1.73e-12 & 2.514e-15 & 365.1 & -1.378e-8 & 100 \\
IRBP1 & 0.7 & 8.37 & 2.355 & 1.428e-5 & 8.791e-10 & 417.4 & -0.008257 & 100 \\
IRBP2 & 0.7 & 8.37 & 3.099 & 1.425e-5 & 8.806e-10 & 417.5 & -0.008243 & 100 \\
IRBP3 & 0.7 & 7.03 & 113.9 & 1.432e-5 & 9.549e-10 & 417.9 & -0.00304 & 100 \\
\\
Naive & 0.9 & 1.0 & 0.2242 & 0.4134 & 2.246e-7 & 651.6 & -8.297e-15 & 100 \\
Bisection & 0.9 & 35.71 & 15.72 & 2.438e-14 & -1.401e-11 & 429.1 & 3.039e-7 & 100 \\
IRBP1 & 0.9 & 8.36 & 2.378 & 0.0007969 & 9.742e-7 & 433.9 & -0.005977 & 100 \\
IRBP2 & 0.9 & 8.36 & 3.082 & 0.0007997 & 1.108e-6 & 433.9 & -0.00598 & 100 \\
IRBP3 & 0.9 & 6.43 & 77.27 & 0.001567 & 1.085e-6 & 433.2 & -0.004778 & 100 \\
\\
Naive & 0.99 & 1.0 & 0.2113 & 0.123 & 4.541e-8 & 457.2 & 8.138e-16 & 100 \\
Bisection & 0.99 & 27.52 & 13.22 & 7.071e-15 & -1.258e-14 & 448.9 & 5.677e-9 & 100 \\
IRBP1 & 0.99 & 9.0 & 2.618 & 0.0004196 & 1.962e-8 & 449.3 & -0.0007902 & 100 \\
IRBP2 & 0.99 & 9.0 & 3.46 & 0.0004194 & 1.954e-8 & 449.3 & -0.0007902 & 100 \\
IRBP3 & 0.99 & 7.0 & 83.49 & 0.0005736 & 9.499e-9 & 449.0 & -0.0001324 & 100 \\
\end{tabular}
\end{center}
\end{table}

 \subsection{Compressed sensing}

Having observed the effectiveness of the bisection approach for projection onto non-convex norm balls, in this section we consider its application to compressed sensing, i.e., recovery of a sparse signal from linear measurements. 
Suppose we want to estimate an unknown but sparse signal $\bs \in \mathbb{R}^d$ from $m$ noisy observations $\bb \in \mathbb{R}^m$ through measurement or sensing matrix $\bA \in \mathbb{R}^{m\times d}$ such that
\[
	\bb = \bA\bs + \bepsilon
	,
\]
where $\bepsilon \in \mathbb{R}^m$ is the noise.
A possible approach is to solve the $\ell_p$-constrained least squares problem
\begin{equation}\label{eqn:cs}
	\min_{\bx \in \mathbb{R}^d} \frac{1}{2}\|\bb - \bA\bx\|_2^2
	\quad
	\text{subject to}
	\quad
	\|\bx\|_p \le r
\end{equation}
for $p \in [0, 1]$.
It is well known, especially when $p=1$,
that under certain conditions on the sensing matrix $\bA$ the solution $\bx^{\star}$ to problem \eqref{eqn:cs} is equal to $\bs$ with high probability \citep{donoho2006}.

Since the $\ell_1$-ball is the convex hull of the $\ell_0$-ball that exactly quantifies sparsity, use of $\ell_p$ norms with $p<1$ is expected to recover $\bs$ better than $\ell_1$ norm, as evidenced by \citet{chartrand2008restricted,blumensath2009,chartrand2016}.
Since in this case problem \eqref{eqn:cs} is non-convex, its global optimum is difficult to find. However, the sequence generated by the projected gradient descent (PGD) method to approximately solve problem \eqref{eqn:cs}
\begin{equation}\label{eqn:pdg}
	\bx^{k+1} = P_{rB_p}[\bx^k + \gamma_k\bA^T(\bb - \bA\bx^k)]
\end{equation}
with $\bx^0 = \mathbf{0}$
has been shown to perform well \citep{bahmani2013,blumensath2009}. Here $\gamma_k > 0$ is the step size at iteration $k$.
Bahmani and Raj \citep{bahmani2013} analyzed the rate of convergence of $\bx^k$ to $\bs$ as a function of $p$,
showing that the sufficient conditions for exact signal recovery become more stringent while robustness to noise and convergence rate worsen, as $p$ increases from $0$ to $1$.
Oymak et al. \citep{oymak2018,sattar2020} extended the analysis for more general (non-convex) constraint sets including $\ell_p$-balls. Their experiments compared cases $p=0$, $0.5$, and $1$, and suggest that while $p=0$ outperforms $p=1$, it is dominated by $p=0.5$.

Our bisection approach for computing $P_{rB_p}$ opens up the opportunity of fully assessing the performance of PGD \eqref{eqn:pdg} for various values of $p$. Following \citet[Sect. III-A]{oymak2018}, we fix the dimension $d=1000$ and vary the sparsity level $s$ from 50 to 1000 (incremented by 50) and the number of measurements $m$ from 200 to 6000 (incremented by 200).
We consider a sparse signal $\bs$ whose support (of size $s$) is chosen uniformly at random with i.i.d. standard normal values, and a random sensing matrix $\bA^{m \times d}$ whose entries are i.i.d. standard normal. Noiseless measurements ($\bepsilon=\mathbf{0}$) are assumed. 
The radius $r$ of the $\ell_p$-ball is set to $\|\bs\|_p$.\footnote{This optimal tuning parameter as required by the theory of \citet{oymak2018} can be relaxed. However, we closely follow the experiment setup of \citet{oymak2018} here.}
A PGD trial with $\gamma_k=1/m$ is stopped after 500 iterations, and recovery is declared successful if $\|\bx^{(n)} - \bs\|_2 / \|\bs\|_2 < 10^{-3}$ to set the optimal solution $\hat{\bx}=\bx^{(n)}$. The average success rate of 50 trials for each combination of $m$ and $s$ is recorded.

The result, plotted in Figure \ref{fig:cs}, clearly demonstrates the phase transition phenomenon in compressed sensing; namely, for each sparsity level, there is a sharp transition of the success probability as the number of measurements increases. The success rate is higher if the signal is more sparse.
This result also confirms the finding of \citet{oymak2018} that $p=0.5$ outperforms both $p=1$ and $p=0$. Among the latter two, $p=0$ has a higher probability of success.
It is interesting to note that both $p=0.1$ and $0.9$ perform slightly better than $p=0$, and that $p=0.3$ and $0.7$ perform similarly to $p=0.5$.
Hence unlike what is predicted by the theory of \citet{bahmani2013}, there seems a \emph{range} of intermediate values of $p$ away from both $0$ and $1$ that performs best in combination with PGD. The reason for this will be an interesting subject of further research.

\begin{figure}
\centering
\begin{tabular}{cc}
\includegraphics[width=0.45\textwidth]{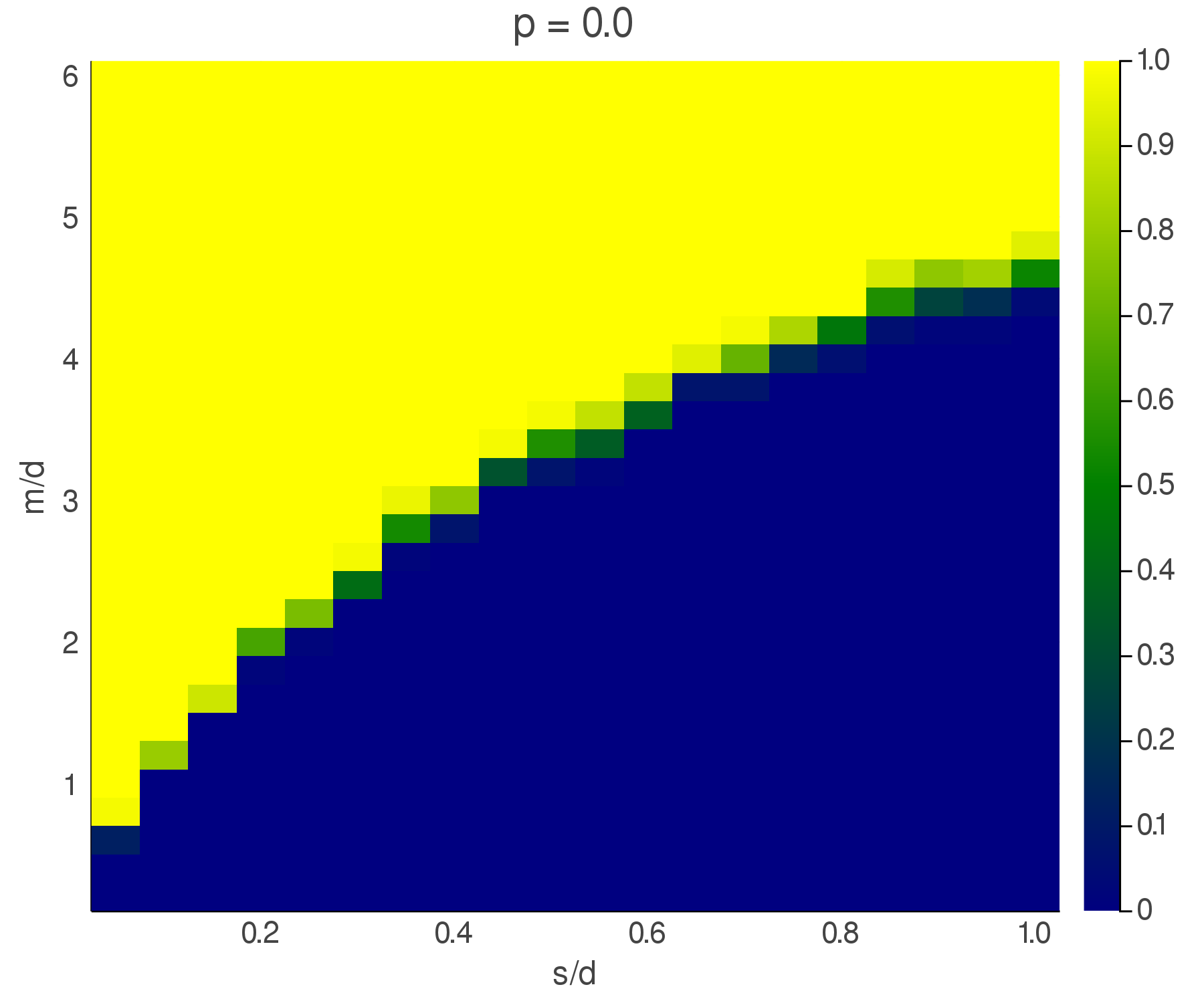} 
&
\includegraphics[width=0.45\textwidth]{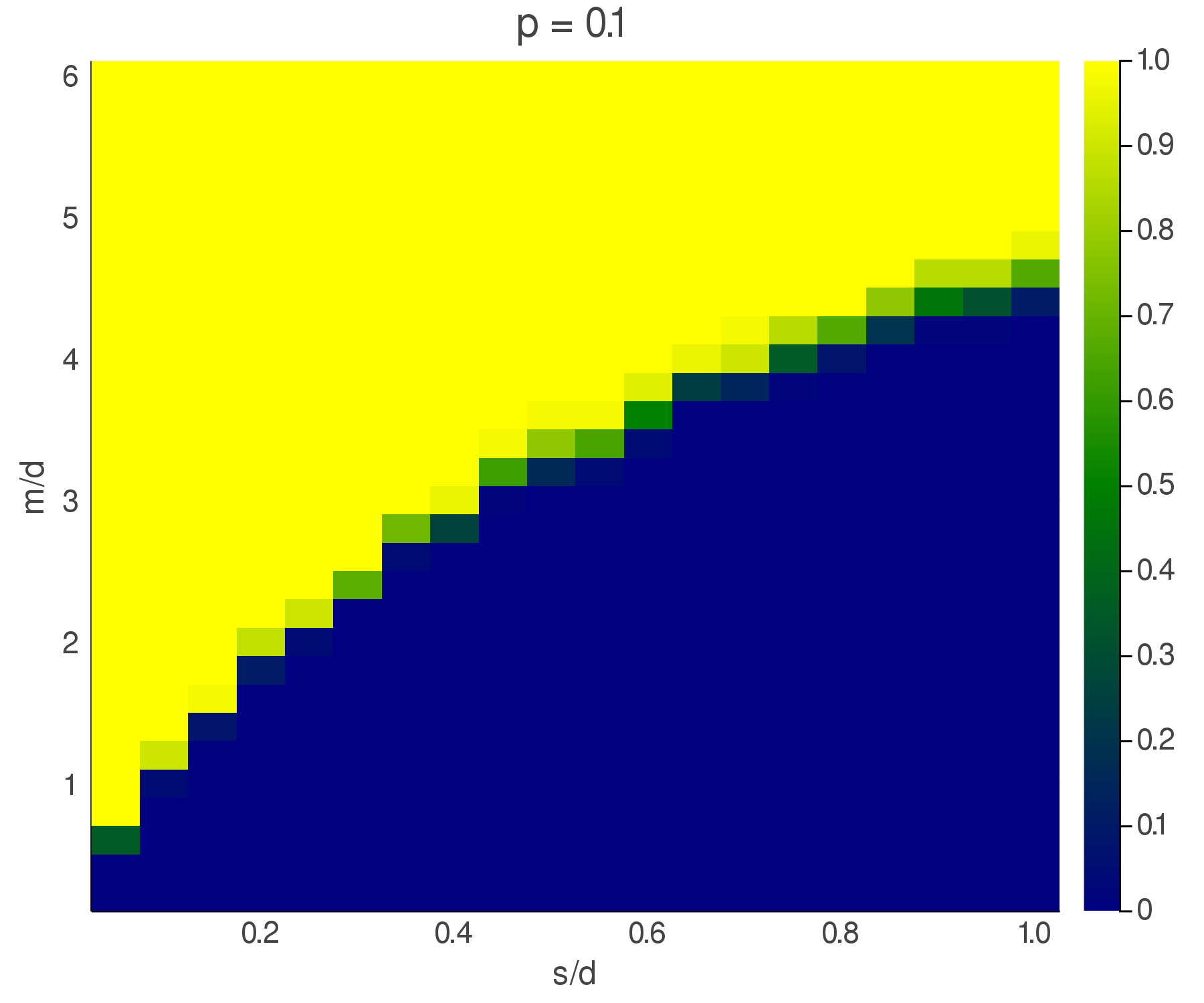}
\\
\includegraphics[width=0.45\textwidth]{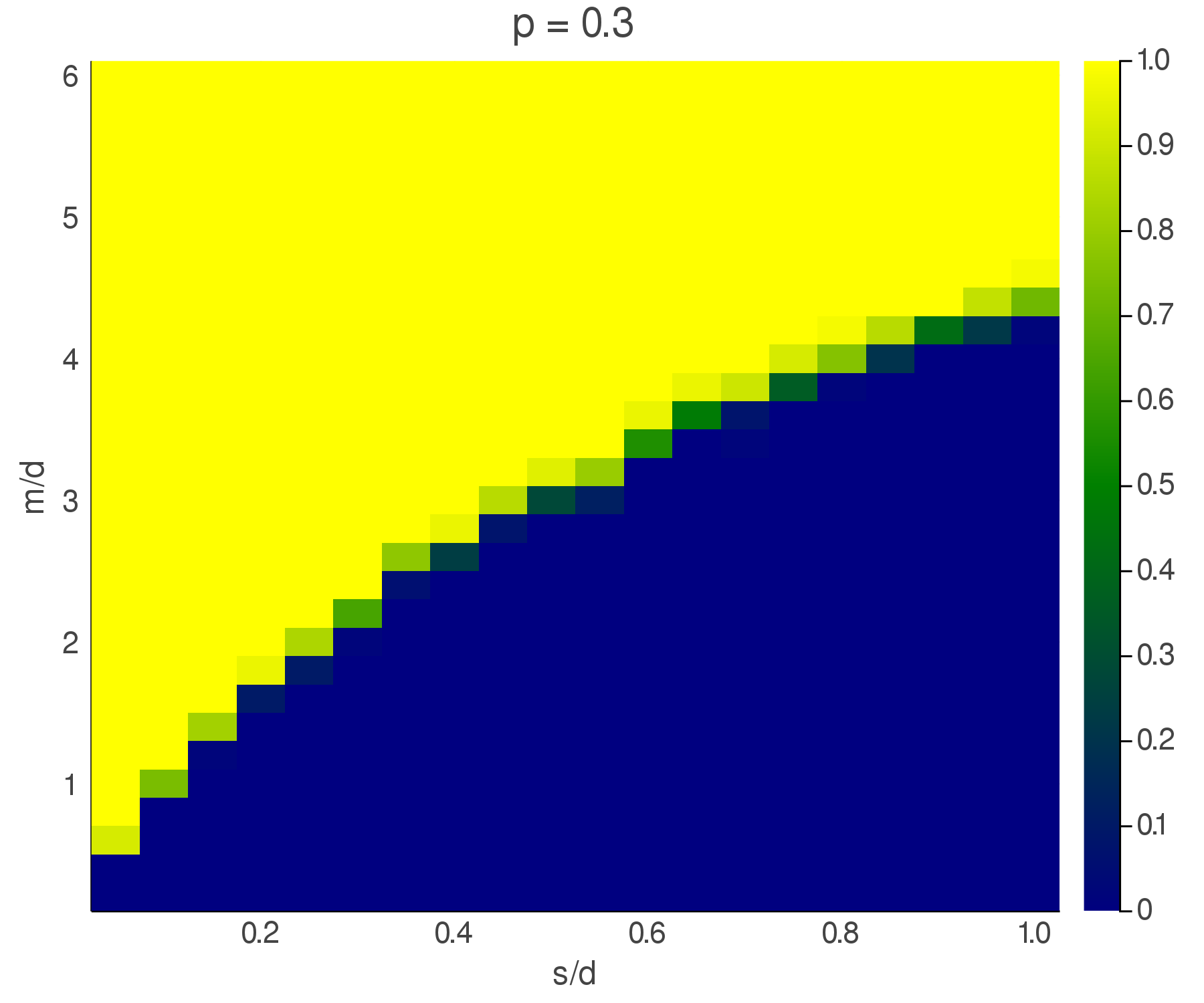} 
&
\includegraphics[width=0.45\textwidth]{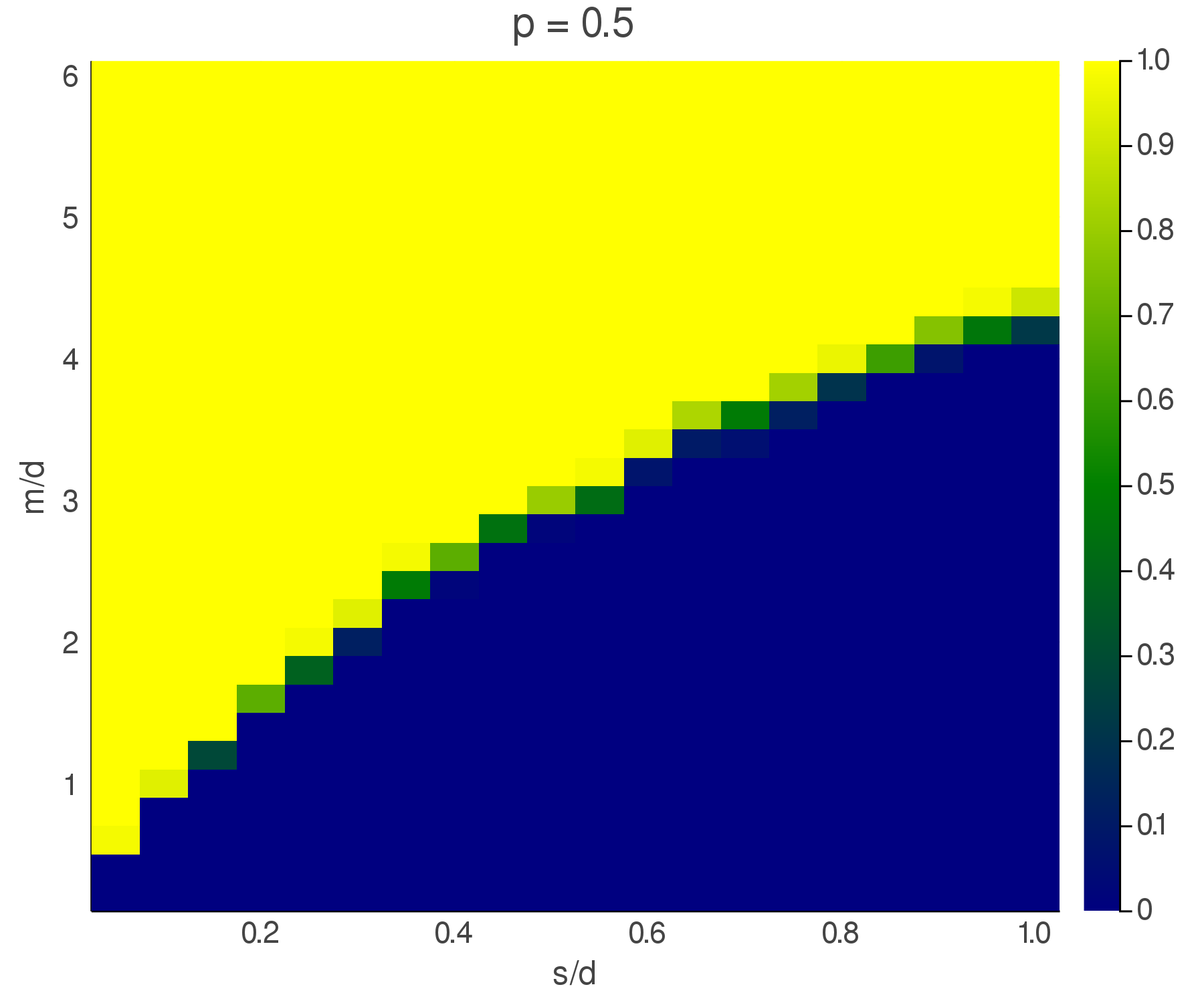}
\\
\includegraphics[width=0.45\textwidth]{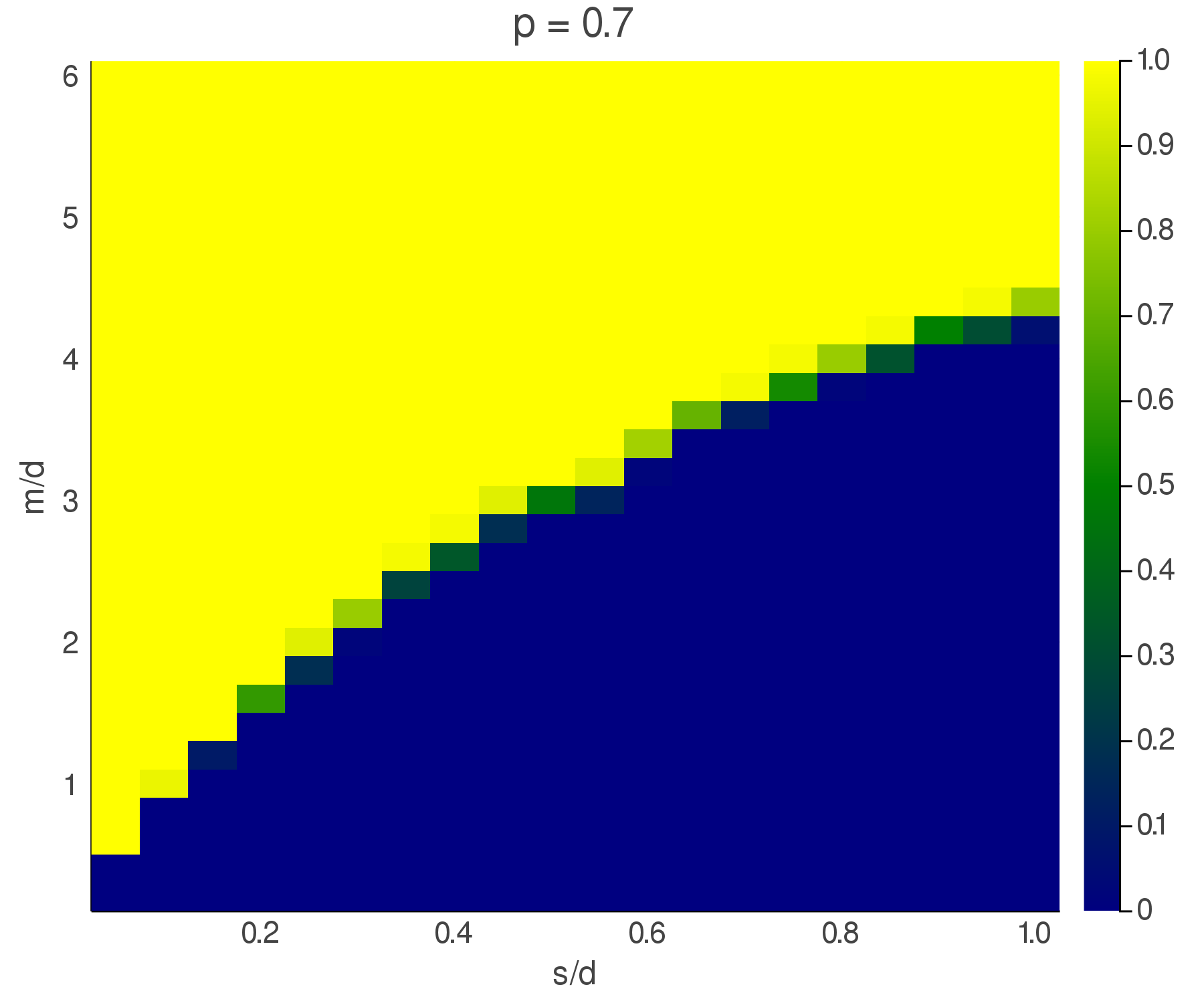} 
&
\includegraphics[width=0.45\textwidth]{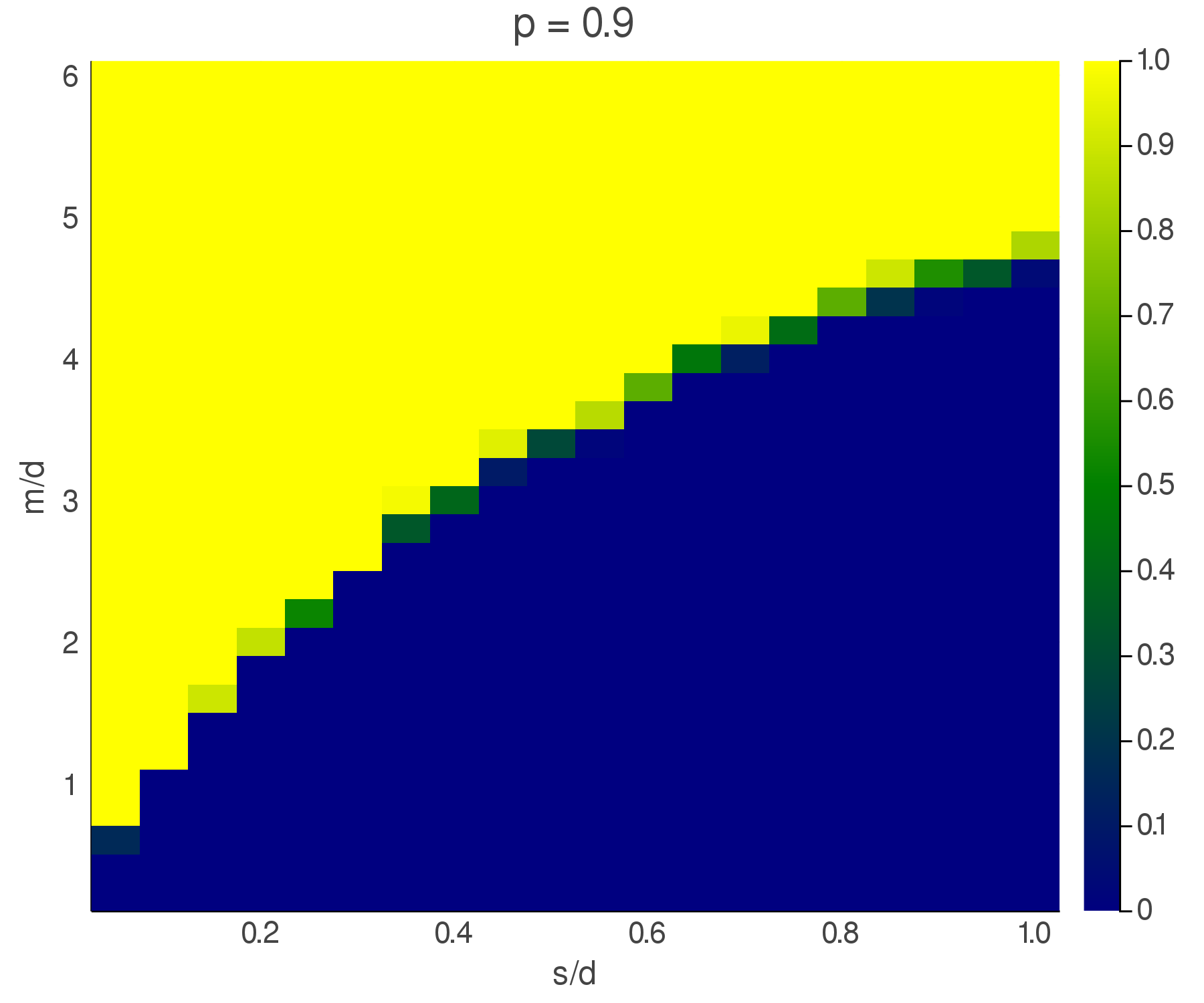}
\\
\includegraphics[width=0.45\textwidth]{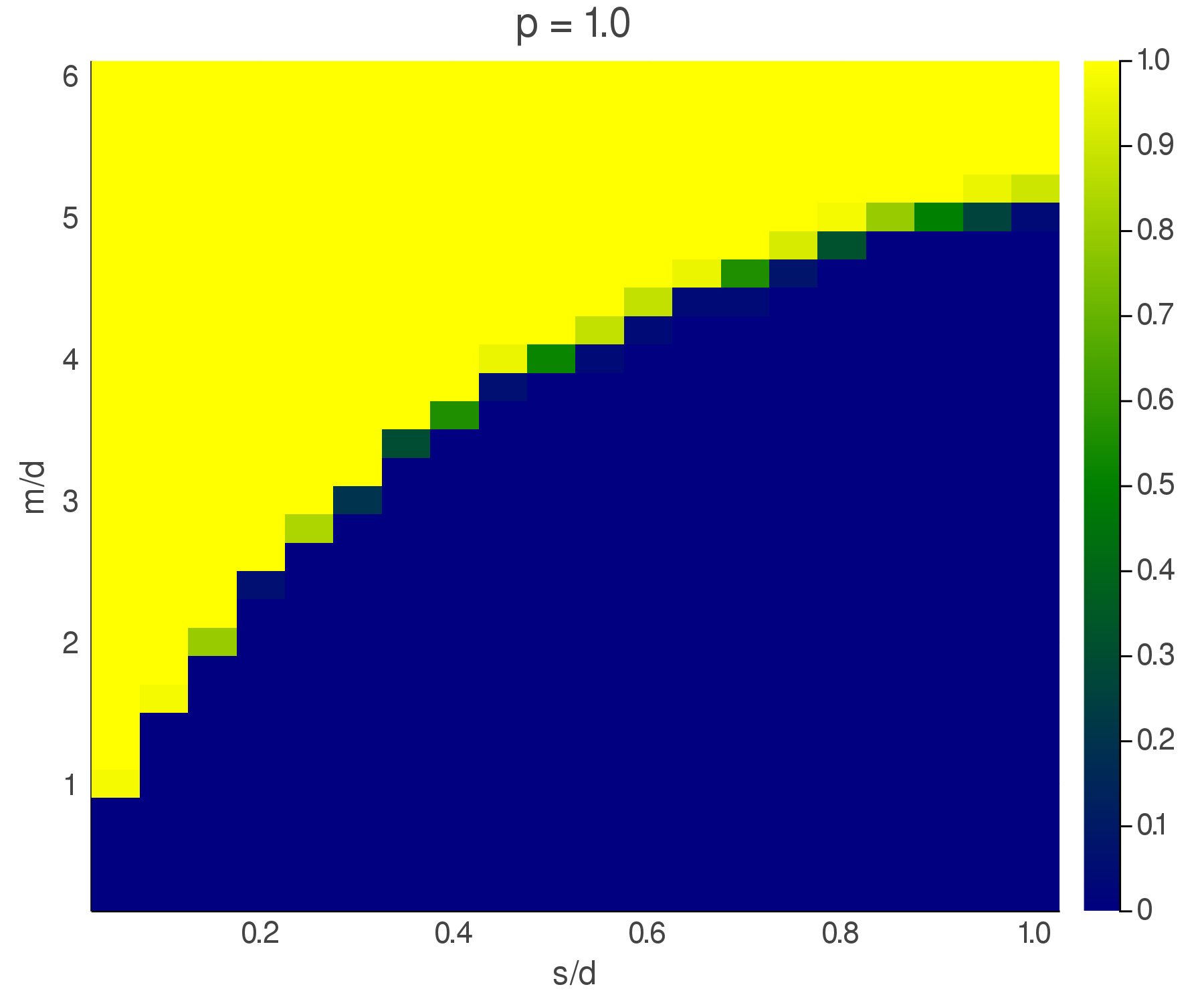} 
&
~
\end{tabular}
\caption{Phase transition diagram of $\ell_p$-projected gradient descent. The bright, yellow regions represent the combinations of the number of measurement $m$ and sparsity level $s$ that result in exact signal recovery.}
\label{fig:cs}
\end{figure}

 \section{Discussion}\label{sec:discussion}
We have proposed robust and highly scalable algorithms for projecting onto $\ell_p$ balls in general, a key component of many learning tasks. Their merits are demonstrated empirically and agree with our theoretical treatment; our contributions outpace and outperform the limited prior work for a difficult but core computational problem, and provide a unified view of the convex and non-convex cases. These tools open the door to previously intractable penalty and constraint formulations, which have shown to be often better suited to various learning tasks than their more convenient counterparts.

\onehalfspacing

\footnotesize
\singlespacing

\bibliographystyle{chicago}
\bibliography{lp_proj}

\end{document}